\documentclass[oneside,american,british,english]{amsart}
\usepackage[T1]{fontenc}
\usepackage[latin9]{inputenc}
\usepackage{float}
\usepackage{mathtools}
\usepackage{amstext}
\usepackage{amsthm}
\usepackage{amssymb}

\makeatletter

\providecommand{\tabularnewline}{\\}

\numberwithin{equation}{section}
\numberwithin{figure}{section}
\theoremstyle{plain}
\newtheorem{thm}{\protect\theoremname}[section]
\theoremstyle{plain}
\newtheorem{lem}[thm]{\protect\lemmaname}
\theoremstyle{definition}
\newtheorem*{defn*}{\protect\definitionname}
\theoremstyle{remark}
\newtheorem*{rem*}{\protect\remarkname}
\theoremstyle{definition}
\newtheorem*{example*}{\protect\examplename}
\theoremstyle{plain}
\newtheorem*{lem*}{\protect\lemmaname}
\theoremstyle{plain}
\newtheorem{prop}[thm]{\protect\propositionname}
\theoremstyle{plain}
\newtheorem{cor}[thm]{\protect\corollaryname}

\usepackage{colortbl}
\usepackage{graphicx}
\usepackage{pgf,tikz,tkz-graph,subcaption}

\usetikzlibrary{arrows,shapes}
\usetikzlibrary{decorations.pathreplacing}

\makeatother

\usepackage{babel}
\addto\captionsamerican{\renewcommand{\corollaryname}{Corollary}}
\addto\captionsamerican{\renewcommand{\definitionname}{Definition}}
\addto\captionsamerican{\renewcommand{\examplename}{Example}}
\addto\captionsamerican{\renewcommand{\lemmaname}{Lemma}}
\addto\captionsamerican{\renewcommand{\propositionname}{Proposition}}
\addto\captionsamerican{\renewcommand{\remarkname}{Remark}}
\addto\captionsamerican{\renewcommand{\theoremname}{Theorem}}
\addto\captionsbritish{\renewcommand{\corollaryname}{Corollary}}
\addto\captionsbritish{\renewcommand{\definitionname}{Definition}}
\addto\captionsbritish{\renewcommand{\examplename}{Example}}
\addto\captionsbritish{\renewcommand{\lemmaname}{Lemma}}
\addto\captionsbritish{\renewcommand{\propositionname}{Proposition}}
\addto\captionsbritish{\renewcommand{\remarkname}{Remark}}
\addto\captionsbritish{\renewcommand{\theoremname}{Theorem}}
\addto\captionsenglish{\renewcommand{\corollaryname}{Corollary}}
\addto\captionsenglish{\renewcommand{\definitionname}{Definition}}
\addto\captionsenglish{\renewcommand{\examplename}{Example}}
\addto\captionsenglish{\renewcommand{\lemmaname}{Lemma}}
\addto\captionsenglish{\renewcommand{\propositionname}{Proposition}}
\addto\captionsenglish{\renewcommand{\remarkname}{Remark}}
\addto\captionsenglish{\renewcommand{\theoremname}{Theorem}}
\providecommand{\corollaryname}{Corollary}
\providecommand{\definitionname}{Definition}
\providecommand{\examplename}{Example}
\providecommand{\lemmaname}{Lemma}
\providecommand{\propositionname}{Proposition}
\providecommand{\remarkname}{Remark}
\providecommand{\theoremname}{Theorem}

\begin{document}
\title{Extrema of Local Mean and Local Density in A Tree}
\author{Ruoyu Wang}
\email{ruoyu.wang@math.uu.se}
\address{Department of Mathematics, Uppsala University}
\begin{abstract}
Given a tree $T,$ one can define the local mean at some subtree $S\subseteq T$
to be the average order of subtrees containing $S.$ It is natural
to ask which subtree of order $k$ achieves the maximal/minimal local
mean among all the subtrees of the same order and what properties
it has. We call such subtrees $k$-maximal subtrees. Wagner and Wang
showed in 2016 that a 1-maximal subtree is a vertex of degree 1 or
2. This paper shows that for any integer $k=1,\ldots,\left|T\right|,$
a $k$-maximal subtree has at most one leaf whose degree is greater
than 2 and at least one leaf whose degree is at most 2. Furthermore,
we show that a $k$-maximal subtree has a leaf of degree greater than
2 only when all its other leaves are leaves in $T$ as well.

In the second part, this paper introduces the local density as a normalization
of local means, for the sake of comparing subtrees of different orders,
and shows that the local density at subtree $S$ is lower-bounded
by $1/2$ with equality if and only if $S$ contains the core of $T.$
On the other hand, local density can be arbitrarily close to 1.
\end{abstract}

\maketitle

\section{Introduction}

In the early 1980s, Jamison \cite{MR735190,MR762896} initiated the
study of mean order of subtrees and density of a tree $T.$ There
are two types of means studied therein. First is the global mean,
which is the average order of $\mathit{all}$ subtrees of $T.$ Then
density is defined as global mean divided by $\left|T\right|$ (known
as the order of $T,$ equal to the cardinality of the underlying vertex
set). The other type is the local mean at a subtree $S\subseteq T,$
where only subtrees containing $S$ are taken into consideration,
i.e., the average order of all subtrees $\mathit{containing}$ subtree
$S.$ Local means can be compared among all the subtrees of the same
order. Throughout this paper, a $k$-maximal subtree is a subtree
of order $k$ that attains the maximal local mean among all the subtrees
of order $k.$

In \cite{MR735190,MR762896}, among other results, Jamison laid the
groundwork for subsequent research in this area:
\begin{itemize}
\item global mean is no greater than local mean at a vertex,
\item local mean manifests monotonicity with respect to set inclusion (i.e.,
for subtrees $S\subsetneq S',$ local mean at $S$ is greater than
local mean at $S'$),
\item the local mean at $S$ is lower bounded by $(|S|+|T|)/2,$ etc. 
\end{itemize}
Several conjectures and open questions were raised in these two papers,
most of which have been solved successively during the 2010s. In 2010,
Vince and Wang proved in \cite{MR2595700} that for a tree with no
vertex of degree 2, the local mean is at least half of $\left|T\right|$
and less than $\frac{3}{4}$ of $\left|T\right|,$ which proves the
conjecture \cite[(7.2)]{MR735190}. Later on, in 2014, \foreignlanguage{british}{Haslegrave}
provided characterizations for both bounds in \cite{MR3213626}. Wagner
and Wang \cite{MR3433637} studied the ratio of local mean (at a vertex)
to global mean and showed that it is bounded above by 2, proving the
conjecture \cite[(7.4)]{MR735190}. 

There are two questions from \cite{MR735190,MR762896} that still
remain open at this moment. The first, \cite[(5.6)]{MR762896}, concerns
the change of global mean under contraction of an edge. Luo, Xu, Wagner
and Wang \cite{MR4563206} proved the special case when the edge contains
a leaf, while the general case is still open. The other is the caterpillar
conjecture \cite[(7.1)]{MR735190}, which asks whether the tree of
maximal density is a caterpillar. Attempts and progress were made
continuously on this question, with the most recent ones given in
\cite{MR3982896,MR4282633}, but a final answer is still unknown.

This paper mainly concerns the extremal local mean in a tree. The
idea originated from the following question \cite[(7.5)]{MR735190}:
\begin{itemize}
\item For any tree $T,$ is the largest local mean (at order 1) of $T$
always taken on at a leaf?
\end{itemize}
This question itself is solved in \cite{MR3433637} by Wagner and
Wang. They showed that a 1-maximal subtree can have degree equal to
1 or 2. Since local mean is defined for all subtrees and can be compared
among all $k$-subtrees with $k=1,\ldots,n$, it is natural to ask
the following general question about the subtree that attains maximal
local mean among all $k$-subtrees:
\begin{itemize}
\item If $S\subseteq T$ is a $k$-maximal subtree, what can one say about
$S?$
\end{itemize}
The main results are shown in the following two theorems.
\begin{flushleft}
$\mathbf{Result}$ \textbf{1 (see Theorem \ref{thm:mainthm}).} \textit{If
$S$ is a $k$-maximal subtree, then $S$ contains at most one leaf
with $\deg_{T}>2$ and at least one leaf with $\deg_{T}\leq2.$ }
\par\end{flushleft}

\begin{flushleft}
$\mathbf{Result}$\textbf{ 2 (see Theorem \ref{thm:refinement}).}
\textit{Let $S\subseteq T$ be a $k$-maximal subtree. If there exists
a leaf $w\in S$ with $\deg_{T}>2,$ then all other leaves of $S$
have $\deg_{T}=1.$ }
\par\end{flushleft}

To facilitate the proof, we introduce a new notion, $\mathit{index},$
which reflects the exact change of local mean when a subtree includes
a neighbour or excludes a leaf. In brief, when a subtree includes
a new neighbour, its local mean increases by the amount of the index
of that neighbour with respect to the subtree, and when a subtree
excludes a leaf, its local mean decreases by the amount of the index
of that leaf with respect to the subtree. The index not only greatly
simplifies some proofs but also provides some insight into the \foreignlanguage{american}{behaviour}
of local mean and density. Using the index, we give a simple proof
to the following theorem, which was first proved by Wagner and Wang
in \cite{MR3433637}. The analysis of the index also yields a parallel
description of the vertex that attains minimal local mean.
\begin{flushleft}
$\mathbf{Result}$ \textbf{3 (see Theorem \ref{thm:oldthm}).} \textit{The
vertex that attains maximal local mean has $\deg_{T}=1$ or $2.$}
\par\end{flushleft}

\begin{flushleft}
$\mathbf{Result}$ \textbf{4 (see Theorem \ref{thm:minimal_case}).
}\textit{Let $T$ be a tree that is not a path. Then the vertex that
attains minimal local mean has $\deg_{T}\geq3.$}
\par\end{flushleft}

In the last part of this paper, we introduce and study the local density
at a subtree $S\subsetneq T,$ denoted by $D_{T}\left(S\right),$
for the sake of normalizing the local means, so that one can compare
subtrees of $T$ at different orders. Hence it is valid to ask what
is the subtree of $T$ that attains maximal local density. Just as
the density of a tree $T,$ introduced also by Jamison, can be interpreted
as the probability of a random vertex being contained in a random
subtree, the local density at subtree $S$ represents the probability
of a random vertex $\mathit{outside}$ $S$ being contained in a random
subtree $\mathit{containing}$ $S.$ It will be shown that local density
can be arbitrarily close to 1. On the other hand, the lower bound
is characterized in the following theorem.
\begin{flushleft}
$\mathbf{Result}$ \textbf{5 (see Theorem \ref{thm:localdensitylowerbound}).}
Let $S\neq\emptyset$ be a proper subtree of $T.$ Then $D_{T}\left(S\right)\geq\frac{1}{2},$
with equality if and only if $S$ contains the core of $T$ (or equivalently,
$T-S$ contains only paths as components).
\par\end{flushleft}

\section{Notations and Preliminaries}

Let $T=\left(V,E\right)$ be a tree, where $V$ is the set of vertices
and $E$ is the set of edges. For adjacent vertices $v,w\in V,$ denote
the edge by $vw.$ The order of $T,$ denoted by $|T|,$ is the cardinality
of the vertex set of $T.$ For a subtree $S\subseteq T$ and a vertex
$v\in S,$ let $\deg_{S}\left(v\right)$ be the degree of $v$ in
$S,$ and $\deg_{T}\left(v\right)$ be the degree of $v$ in $T.$
If some vertex $w\in T$ lies outside of $S$ and is adjacent to some
vertex in $S,$ then $w$ is called a neighbour of $S$ or equivalently
$w$ is adjacent to $S.$ 

The following are notations related to number and order of subtrees
to be used herein. Let $v\in T-S$ and $(S,v)_{T}$ be the smallest
subtree in $T$ generated by $S$ and $v.$ We define
\begin{align*}
N_{T}: & \quad\textrm{number of subtrees of \ensuremath{T},}\\
R_{T}: & \quad\textrm{total cardinality of all subtrees of \ensuremath{T},}\\
N_{T}\left(S\right): & \quad\textrm{number of subtrees containing \ensuremath{S},}\\
R_{T}\left(S\right): & \quad\textrm{total order of all subtrees containing \ensuremath{S},}\\
N_{T}\left(v;S\right): & \quad\textrm{number of subtrees \ensuremath{R} that contain \ensuremath{v} and satisfy \ensuremath{R\cap(S,v)_{T}=\{v\}}, }\\
R_{T}\left(v;S\right): & \quad\textrm{total cardinality of all subtrees described above}.
\end{align*}
Here $N_{T}\left(v;S\right)$ counts the number of subtrees that are
rooted at $v$ and grow $\mathit{away}$ from $S,$ and $R_{T}\left(v;S\right)$
is the sum of cardinalities of such subtrees. 

Finally, we introduce some notations about mean and density:
\begin{align*}
\mu_{T}\left(S\right)=\frac{R_{T}\left(S\right)}{N_{T}\left(S\right)}: & \quad\textrm{the local mean at subtree \ensuremath{S,}}\\
\mu_{T}=\frac{R_{T}}{N_{T}}: & \quad\textrm{the global mean of \ensuremath{T}, }\\
D_{T}=\frac{\mu_{T}}{|T|}: & \quad\textrm{the global density of \ensuremath{T.}}
\end{align*}

Next, we introduce some preliminary results which will be used throughout
this paper.

Let $v_{1},\ldots,v_{d}$ be the neighbours of $v$ in $T.$ Let $T_{1},\ldots,T_{d}$
be the corresponding subtrees rooted at $v_{1},\ldots,v_{d}$ respectively,
with $v$ removed. Denote $N_{i}=N_{T_{i}}\left(v_{i}\right)$ and
$R_{i}=R_{T_{i}}\left(v_{i}\right),$ for any $i\in\left\{ 1,\ldots,d\right\} .$ 

First, one has
\begin{equation}
N_{T}\left(v\right)=\prod_{i=1}^{d}\left(N_{i}+1\right).\label{eq:2}
\end{equation}
Indeed, to form a subtree containing $v,$ one simply chooses, for
each $i,$ any subtree containing $v_{i}$ from $T_{i}$ (there are
$N_{i}$ ways to complete this step) or leaves it empty (contributing
the $+1$). 

Next, 
\begin{equation}
\mu_{T}\left(v\right)=1+\sum_{i=1}^{d}\frac{R_{i}}{N_{i}+1}.\label{eq:3}
\end{equation}
This equality simply reflects the following fact: the local mean at
$v$ can be calculated by $1,$ which corresponds to the vertex $v,$
plus the adjusted local mean (adjusted to include the empty subtree,
with the +1 in the denominator) at each $v_{i}$ of $T_{i}.$ The
reason for adjusted mean here is that one needs to include the empty
subtree (conventionally the definition of mean subtree order does
not count the empty subtree), as one or more empty subtrees at $v_{i}$
of $T_{i}$ are acceptable when counting subtrees at $v.$ A more
detailed proof can be found in Lemma 3.2 (b) of \cite{MR735190},
or Lemma 1 of \cite{MR762896}. 

From Theorem 3.6 in \cite{MR735190}, one has the following lower
bound for the local mean at $v,$
\begin{equation}
\mu_{T}\left(v\right)\geq\frac{n+1}{2},\label{eq:lowerbound}
\end{equation}
where $n=\left|T\right|$ and equality holds if and only if $T$ is
astral over $v$ ($T$ is either a path or $v$ is the only vertex
of degree $>2$ in $T$).

Now let $v,w$ be two adjacent vertices of $T.$ Denote by $T_{v}$
and $T_{w}$ the two components of $T-vw$ containing $v$ and $w$
respectively, as in Figure~\ref{fig:TvTw}. 

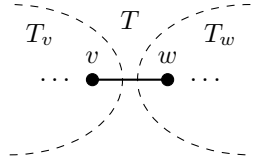
\begin{figure}[h]     
\begin{center}     
\begin{tikzpicture}     
	{          
	\draw[thick] (0,0)--(1,0);    
	\draw[fill] (0,0) circle (0.08);     
	\draw[fill] (1,0) circle (0.08);
	
	\node at (-0.5,0) {$\ldots$};
	
	\draw[dashed] (2.1,1) arc (90:270:1.5cm and 1cm);
	\draw[dashed] (-1.1,-1) arc (-90:90:1.5cm and 1cm);
	
	\node at (0.5,0.8) {$T$};
	\node at (1.7,0.6) {$T_w$};
	\node at (-0.7,0.6) {$T_v$};
    \node at (0,0.3) {$v$};
	\node at (1,0.3) {$w$};
	\node at (1.5,0) {$\ldots$};    
	     } 	
\end{tikzpicture}\\ 
\end{center} 
\caption{$T_v$ and $T_w$.}
\label{fig:TvTw} 
\end{figure}

Then
\begin{equation}
R_{T}\left(v,w\right)=N_{T_{w}}(w)R_{T_{v}}(v)+R_{T_{w}}(w)N_{T_{v}}(v),\label{eq:4}
\end{equation}
where $R_{T}\left(v,w\right)$ is the total order of subtrees of $T$
containing both $v$ and $w.$ We split every subtree containing $v$
and $w$ into two parts in a natural way, namely the subtree rooted
at $v$ without $w$ and the subtree rooted at $w$ without $v,$
and then count. Now each subtree at $v$ combines with each and every
subtree at $w$ exactly once in the sum of $R_{T}(v,w),$ hence it
occurs exactly as many times as the number of subtrees at $w$ of
$T_{w},$ i.e., $N_{T_{w}}(w)$ times. So each subtree at $v$ contributes
to $R_{T}\left(v,w\right)$ with a factor of $N_{T_{w}}(w),$ summing
over all subtrees at $v$ gives $N_{T_{w}}(w)R_{T_{v}}(v),$ which
is the first summand of (\ref{eq:4}). A symmetric argument gives
the second summand.

\medskip{}

The last observations are concerned with contraction with respect
to a subtree. 

For a tree $T$ and a subtree $S,$ let $T/S$ be the new tree obtained
by collapsing $S$ to a single vertex $s$ and keeping all the adjacency
relations in the natural way: a vertex is adjacent to $S$ in $T$
if and only if it is adjacent to $s$ in $T/S.$

Now let $S\subseteq T$ be a subtree and $U\subseteq S$ a subtree
of $S.$ Then one has a natural bijection induced by contraction with
respect to $U:$
\[
\left\{ \textrm{subtrees of \ensuremath{T} containing \ensuremath{S}}\right\} \leftrightarrow\left\{ \textrm{subtrees of \ensuremath{T/U} containing \ensuremath{S/U}}\right\} .
\]
Furthermore, through the same bijection, one sees that each subtree
of $T$ in the left set has an order greater than the corresponding
subtree of $T/U$ in the right set by exactly $|U|-1.$ Hence, the
same applies to the local means, namely
\begin{equation}
\mu_{T}\left(S\right)=\mu_{T/U}\left(S/U\right)+|U|-1.\label{eq:5}
\end{equation}

\section{On maximal local means}
\begin{lem}[Index Lemma]
\label{lem:lem3.1}If $w$ is a neighbour of subtree $S\subsetneq T,$
then 
\[
\mu_{T}\left(S+w\right)=\mu_{T}\left(S\right)+\frac{R_{T}\left(w;S\right)}{N_{T}\left(w;S\right)\left(1+N_{T}\left(w;S\right)\right)}.
\]
\end{lem}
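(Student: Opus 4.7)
The plan is to exploit the tree structure by decomposing every subtree containing $S$ according to its intersection with the component of $T - s_{0}w$ lying on the $w$-side, where $s_{0}\in S$ is the unique vertex of $S$ adjacent to $w$ (unique because $T$ is a tree). Removing the edge $s_{0}w$ splits $T$ into two components, $T_{v}\supseteq S$ and $T_{w}\ni w$.

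The key observation I would record first is that every subtree $R\supseteq S$ decomposes uniquely as $R=R_{v}\cup R_{w}$, where $R_{v}:=R\cap T_{v}$ is a subtree of $T_{v}$ containing $S$, and $R_{w}:=R\cap T_{w}$ is either empty or a subtree of $T_{w}$ containing $w$ (the latter happening precisely when $R\ni w$). Conversely, any compatible pair reassembles via the edge $s_{0}w$ into a valid subtree, with $|R|=|R_{v}|+|R_{w}|$. Subtrees containing $S+w$ are exactly those with $R_{w}\neq\emptyset$, and by definition $N_{T_{w}}(w)=N_{T}(w;S)$ and $R_{T_{w}}(w)=R_{T}(w;S)$.

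With the abbreviations $M=N_{T_{v}}(S)$, $Q=R_{T_{v}}(S)$, $n=N_{T}(w;S)$, and $r=R_{T}(w;S)$, the decomposition immediately yields $N_{T}(S)=M(n+1)$ and $R_{T}(S)=Q(n+1)+Mr$ (the $+1$ accounting for the empty extension on the $w$-side), while $N_{T}(S+w)=Mn$ and $R_{T}(S+w)=Qn+Mr$. Dividing then gives $\mu_{T}(S)=Q/M+r/(n+1)$ and $\mu_{T}(S+w)=Q/M+r/n$, and the common $Q/M$ term cancels in the difference, leaving $r/n-r/(n+1)=r/(n(n+1))$, which is the claimed expression.

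There is no genuine obstacle here: once the product decomposition across the edge $s_{0}w$ is identified, everything reduces to routine arithmetic. What is perhaps worth emphasising in the write-up is the conceptual role of the $+1$ shift between the two denominators, which is exactly what distinguishes subtrees containing $S$ from subtrees containing $S+w$, and which is ultimately what forces the local mean to strictly increase when $S$ absorbs a neighbour.
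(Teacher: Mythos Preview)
Your proof is correct and rests on the same product decomposition across the edge joining $S$ to $w$ that the paper uses. The only difference is cosmetic: the paper first contracts $S$ to a single vertex via (\ref{eq:5}) and then computes $\mu_{T}(v,w)-\mu_{T}(v)$ through the formula (\ref{eq:4}), whereas you work directly with $S$ and compute $\mu_{T}(S)$ and $\mu_{T}(S+w)$ separately before subtracting, which makes the common term $Q/M$ and the $+1$ shift in the denominators more transparent.
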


The last expression on the right of the above equation turns out to
appear in many different places. We give it a name for the sake of
future convenience.
\begin{defn*}
Let $T_{v}$ be a rooted tree with root $v.$ Define the $\mathit{index}$
of the rooted tree $T_{v}$ to be
\[
i\left(T_{v}\right)\coloneqq\frac{R_{T_{v}}\left(v\right)}{N_{T_{v}}\left(v\right)\left(1+N_{T_{v}}\left(v\right)\right)}.
\]
Furthermore, let $T$ be a tree and $S\subseteq T$ a subtree. For
any vertex $v$ outside of $S$ or a leaf of $S,$ define the $\mathit{index}$
of $v$ with respect to $S$ to be
\begin{equation}
i\left(v;S\right):=\frac{R_{T}\left(v;S\right)}{N_{T}\left(v;S\right)\left(1+N_{T}\left(v;S\right)\right)}.
\end{equation}
\end{defn*}
\begin{rem*}
With notations as above, $i\left(v;S\right)$ is the same as $i\left(T_{v}\right),$
where $T_{v}$ is the rooted tree at $v$ that ``grows away'' from
$S.$

The index of a vertex $v$ is well-defined once a subtree rooted at
$v$ is assigned by the context without ambiguity. When this reference
of ``direction'' is clear from context, we will sometimes drop the
second parameter and write $i\left(v\right).$
\end{rem*}
\begin{example*}
In Figure~\ref{fig:exIndex}, $T_{w}$ (indicated by the dashed ellipse),
is the rooted tree at $w$ that grows away from $S.$ We have
\[
N\left(w;S\right)=4,R\left(w;S\right)=8,N\left(v;S\right)=5,R\left(v;S\right)=13,
\]
and 
\[
i\left(w;S\right)=i\left(T_{w}\right)=\frac{2}{5},i\left(v;S\right)=\frac{13}{30}.
\]
\hfill \qedsymbol
\end{example*}
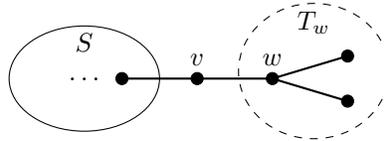
\begin{figure}[h]     
\begin{center}     
\begin{tikzpicture}     
	{          
	\draw[thick] (0,0)--(1,0) -- (2,0);
	\draw[thick] (3,0.3)--(2,0) -- (3,-0.3);    
	\draw[fill] (0,0) circle (0.08);     
	\draw[fill] (1,0) circle (0.08);
	\draw[fill] (2,0) circle (0.08);     
	\draw[fill] (3,0.3) circle (0.08);
	\draw[fill] (3,-0.3) circle (0.08); 
	\draw[] (-0.5,0) ellipse (1cm and 0.7cm); 
	\draw[dashed] (2.55,0.1) ellipse (1cm and 0.9cm); 
	\node at (-0.5,0) {$\ldots$};
	\node at (-0.5,0.48) {$S$};
	\node at (2.55,0.75) {$T_w$};
    
	\node at (1,0.25) {$v$};     
	\node at (2,0.25) {$w$};     } 	
\end{tikzpicture}\\ 
\end{center} 
\caption{Indices of {$v$} and {$w$}.}
\label{fig:exIndex} 
\end{figure} 

Let $w$ be a neighbour of $v.$ It is proved in \cite[Lemma 3.2c]{MR735190}
that 
\begin{equation}
i\left(v;w\right)\leq\frac{1}{2}\label{eq:0}
\end{equation}
with equality if and only if the component in $T-w$ containing $v$
is a leaf or a path, as in Figure~\ref{fig:maxindex}. Following the
conventions in \cite{MR3982896}, we call a maximal path containing
a leaf and vertices of degree 2 a $limb$ and call a vertex $v$ in
a limb a $\mathit{limb}$ $\mathit{vertex}.$ Note that $v$ is a
limb vertex if and only if $i\left(v;w\right)=\frac{1}{2}$ for $\mathit{some}$
neighbour $w$ of $v.$ Moreover, call the remaining part of a tree,
namely $T-\left\{ \textrm{limbs}\right\} ,$ the $core$ of $T,$
denoted by $T^{*}.$ 

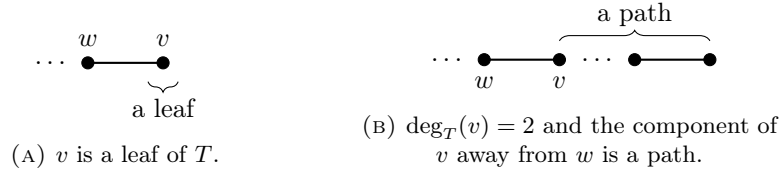
\begin{figure}[h]
\captionsetup{justification=centering}
\begin{minipage}[b]{.45\linewidth} 
\begin{center}     
\begin{tikzpicture}     
	{          
	
	\draw[thick] (-0.5,0)--(0.5,0);

	\draw[fill] (-0.5,0) circle (0.08);     
	\draw[fill] (0.5,0) circle (0.08);

	\draw [decorate,decoration={brace,amplitude=4pt},xshift=0pt,yshift=0pt] (0.7,-0.25) -- (0.3,-0.25) node [black,midway,yshift=-0.35cm]{a leaf};
	
	\node at (-1,0) {$\ldots$};
	\node at (0.5,0.3) {$v$};     
	\node at (-0.5,0.3) {$w$};
   	} 	
\end{tikzpicture}\\ 
\subcaption{$v$ is a leaf of $T$.} 
\end{center} 
\end{minipage}\quad\begin{minipage}[b]{.45\linewidth}
\begin{center}     
\begin{tikzpicture}
{ 	
	\draw[thick] (3,0)--(4,0);
	\draw[thick] (5,0) -- (6,0);
   
	\draw[fill] (3,0) circle (0.08);     
	\draw[fill] (4,0) circle (0.08);
	 
	\draw[fill] (5,0) circle (0.08); 
	\draw[fill] (6,0) circle (0.08);

	\draw [decorate,decoration={brace,amplitude=4pt},xshift=0pt,yshift=0pt] (4,0.25) -- (6,0.25) node [black,midway,yshift=0.35cm]{a path};
	
    \node at (4.5,0) {$\ldots$};
	\node at (2.5,0) {$\ldots$};
	\node at (4,-0.3) {$v$};     
	\node at (3,-0.3) {$w$};
	
	} 	
\end{tikzpicture}\\ 
\subcaption{$\text{deg}_T(v)=2$ and the component of $v$ away from $w$ is a path.} 
\end{center} 
\end{minipage} 

\caption{$i(v;w)=\frac{1}{2}$.}\label{fig:maxindex} \end{figure} 
\begin{rem*}
Being a limb vertex is a necessary but not sufficient condition for
having index $\frac{1}{2},$ as index depends on the reference to
a subtree. It is possible for some limb vertex $v$ to have an index
not equal to $\frac{1}{2}.$ For a simple example, consider the tree
in Figure~\ref{fig:leg}. Then $v$ is a limb vertex (in fact, so
are all vertices except for $c$), as $i\left(v;S\right)=i\left(v;c\right)=\frac{1}{2},$
but $i\left(v;w\right)\neq\frac{1}{2}.$ 
\end{rem*}
\begin{figure}[h]     
\begin{center}     
\begin{tikzpicture}     
	{          
	\draw[thick] (0,0)--(1,0) -- (2,0);
	\draw[thick] (3,0.6)--(2,0) -- (3,-0.6);    
	\draw[fill] (0,0) circle (0.08);     
	\draw[fill] (1,0) circle (0.08);
	\draw[fill] (2,0) circle (0.08);     
	\draw[fill] (3,0.6) circle (0.08);
	\draw[fill] (3,-0.6) circle (0.08); 
	\draw[] (2.6,0) ellipse (1cm and 1.2cm); 
	
	\node at (2.5,0.8) {$S$};
    \node at (2,0.3) {$c$};
	\node at (1,0.3) {$v$};     
	\node at (0,0.3) {$w$};     } 	
\end{tikzpicture}\\ 
\end{center} 
\caption{Limb vertex.}
\label{fig:leg} 
\end{figure}
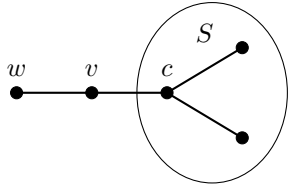 

We now restate Lemma \ref{lem:lem3.1} using the index and prove it.
\begin{lem*}[Index Lemma restated]
 If $w$ is a neighbour of subtree $S\subsetneq T,$ then 
\[
\mu_{T}\left(S+w\right)=\mu_{T}\left(S\right)+i\left(w;S\right).
\]
 Similarly, if $v$ is a leaf of $S\subseteq T$ and $S\neq\{v\},$
then 
\[
\mu_{T}\left(S-v\right)=\mu_{T}\left(S\right)-i\left(v;S\right).
\]
\end{lem*}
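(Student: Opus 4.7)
The plan is to prove the two statements in turn, reducing the deletion formula to the addition formula.

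For the addition formula, the approach is to derive an explicit branch expansion for $\mu_{T}(S)$ and perform the same expansion for $\mu_{T}(S+w)$, then subtract. Let $w=u_{1},u_{2},\ldots,u_{d}$ be the neighbours of $S$ in $T$, let $T_{j}$ denote the component of $T-S$ containing $u_{j}$ (rooted at $u_{j}$), and write $N_{j}=N_{T_{j}}(u_{j})$ and $R_{j}=R_{T_{j}}(u_{j})$. Contracting $S$ to a single vertex $s$ via (\ref{eq:5}) and applying (\ref{eq:3}) at $s$ in $T/S$ yields
\[
\mu_{T}(S)=|S|+\sum_{j=1}^{d}\frac{R_{j}}{N_{j}+1}.
\]

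The same procedure applies to $S+w$: the contracted vertex in $T/(S+w)$ has neighbours $u_{2},\ldots,u_{d}$ together with the neighbours of $w$ inside $T_{w}:=T_{1}$. By (\ref{eq:3}) applied to $T_{w}$ at $w$, the partial sum over the latter neighbours equals $\mu_{T_{w}}(w)-1=R_{1}/N_{1}-1$, so that
\[
\mu_{T}(S+w)=|S|+\frac{R_{1}}{N_{1}}+\sum_{j=2}^{d}\frac{R_{j}}{N_{j}+1}.
\]
Subtracting the two expressions leaves exactly $R_{1}/\bigl(N_{1}(N_{1}+1)\bigr)$, which is $i(w;S)$ since $N_{1}=N_{T}(w;S)$ and $R_{1}=R_{T}(w;S)$ by definition.

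For the deletion formula, if $v$ is a leaf of $S$ with $|S|>1$, then $v$ is a neighbour of the subtree $S-v$. The rooted tree at $v$ growing away from $S$ and the rooted tree at $v$ growing away from $S-v$ are identical (both are the component of $T$ containing $v$ after deleting the unique edge from $v$ to $S-v$), so $i(v;S)=i(v;S-v)$. Applying the addition formula with subtree $S-v$ and neighbour $v$ then gives $\mu_{T}(S)=\mu_{T}(S-v)+i(v;S-v)=\mu_{T}(S-v)+i(v;S)$, which rearranges to the desired identity.

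The main obstacle is the bookkeeping in expanding $\mu_{T}(S+w)$ via its own branches, specifically verifying that the sum coming from the neighbours of $w$ inside $T_{w}$ assembles into $R_{1}/N_{1}$ rather than $R_{1}/(N_{1}+1)$. This single one-index shift is exactly what produces the index $i(w;S)$.
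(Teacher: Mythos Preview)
Your argument is correct. Both you and the paper invoke the contraction identity (\ref{eq:5}), but the computational core differs. The paper reduces to the case $S=\{v\}$ and then computes $\mu_{T}(v,w)-\mu_{T}(v)$ directly as a difference of two fractions, expanding $R_{T}(v,w)$ via the splitting formula (\ref{eq:4}) and simplifying algebraically. You instead apply the branch expansion (\ref{eq:3}) to both $\mu_{T}(S)$ and $\mu_{T}(S+w)$ and subtract, so the whole calculation collapses to the single observation that the $w$-branch contribution shifts from $R_{1}/(N_{1}+1)$ to $R_{1}/N_{1}$. Your route is shorter and more transparent, and it makes the role of the index visible immediately; the paper's route is slightly more self-contained in that it does not rely on recognising the partial sum over the neighbours of $w$ inside $T_{w}$ as $\mu_{T_{w}}(w)-1$. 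Your explicit handling of the deletion formula (noting $i(v;S)=i(v;S-v)$ and reapplying the addition formula) is also a nice touch; the paper leaves this implicit.
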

\begin{proof}[Proof of the Index Lemma]
By (\ref{eq:5}), one can always contract $S$ and it suffices to
show the case when $S$ is a single vertex, say $v.$ So the situation
reduces to that for adjacent vertices $v,w\in T.$ Let $T_{v},T_{w}$
be the \foreignlanguage{american}{components} of $T-vw$ that contain
$v,w$ respectively. We need to show 
\begin{align*}
\mu_{T}\left(v,w\right)-\mu_{T}\left(v\right) & =i\left(w;v\right)\\
\frac{R_{T}\left(v,w\right)}{N_{T}\left(v,w\right)}-\frac{R_{T}\left(v\right)}{N_{T}\left(v\right)} & =i\left(w;v\right).
\end{align*}
Note that any subtree of $T$ containing $v$ is either a subtree
of $T_{v}$ containing $v$ or a subtree of $T$ containing both $v$
and $w.$ Hence, $N_{T}\left(v\right)=N_{T_{v}}\left(v\right)\left(N_{T_{w}}\left(w\right)+1\right)$
and $R_{T}\left(v\right)=R_{T_{v}}\left(v\right)+R_{T}\left(v,w\right).$
Then using (\ref{eq:4}), one has,
\begin{align*}
\frac{R_{T}\left(v,w\right)}{N_{T}\left(v,w\right)}-\frac{R_{T}\left(v\right)}{N_{T}\left(v\right)} & =\frac{N_{T_{v}}\left(v\right)R_{T_{w}}\left(w\right)+R_{T_{v}}\left(v\right)N_{T_{w}}\left(w\right)}{N_{T_{v}}\left(v\right)N_{T_{w}}\left(w\right)}\\
 & \quad-\frac{R_{T_{v}}\left(v\right)+\left(N_{T_{v}}\left(v\right)R_{T_{w}}\left(w\right)+R_{T_{v}}\left(v\right)N_{T_{w}}\left(w\right)\right)}{N_{T_{v}}\left(v\right)\left(N_{T_{w}}\left(w\right)+1\right)}\\
 & =\frac{N_{T_{v}}\left(v\right)R_{T_{w}}\left(w\right)}{N_{T_{v}}\left(v\right)N_{T_{w}}\left(w\right)\left(N_{T_{w}}\left(w\right)+1\right)}\\
 & =\frac{R_{T_{w}}\left(w\right)}{N_{T_{w}}\left(w\right)\left(N_{T_{w}}\left(w\right)+1\right)}\\
 & =i\left(w;v\right).
\end{align*}
\end{proof}
\begin{rem*}
Since the index is a positive number whose value lies in $(0,\frac{1}{2}],$
the previous lemma immediately implies 
\[
\mu_{T}\left(v\right)<\mu_{T}\left(v,w\right)\leq\mu_{T}\left(v\right)+\frac{1}{2},
\]
for any adjacent vertices $v,w$ in $T$ (\cite[Lemma 4.1]{MR735190}),
and other 'continuity' results \cite[Theorem 4.3 and 4.5]{MR735190}.
\end{rem*}
In \cite{MR3433637}, it is shown that a vertex achieving maximal
local mean has degree 1 or 2. An immediate result following the Index
Lemma is a refinement of this description.

Let $T$ be a tree that is not a path. Consider the set of all maximal
paths in $T,$ namely all subtrees that are paths whose end-vertices
both have degree $\neq2$ and all other vertices have degree 2 in
$T.$ Observe that there are only two kinds of paths in this set,
namely those paths with both end-vertices of degree $\ge3,$ which
will be called $core$-$paths$ (as they appear only in the core of
a tree), and limbs. Then the refinement can be stated as follows.
\begin{thm}
\label{thm:branchingpath}Let $T$ be a tree that is not a path. Let
$v\in T$ be a vertex that achieves the maximal local mean at size
1. Then $v$ is either a leaf, or it lies in a core-path.
\end{thm}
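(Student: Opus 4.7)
My plan is to combine the Wagner--Wang degree restriction (any maximizer $v$ of $\mu_T$ has $\deg_T(v)\in\{1,2\}$) with a one-edge comparison coming from the Index Lemma in order to rule out every degree-$2$ \emph{limb} vertex.

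First I would extract the following comparison formula. Applying the Index Lemma twice to the subtree $\{v,w\}$ gives $\mu_T(\{v,w\})=\mu_T(v)+i(w;v)=\mu_T(w)+i(v;w)$, and hence
\[
\mu_T(w)-\mu_T(v)=i(w;v)-i(v;w)
\]
for every edge $vw$ of $T$. So moving from $v$ to an adjacent vertex $w$ strictly increases the local mean if and only if $i(w;v)>i(v;w)$.

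Next I would show that if $v$ has $\deg_T(v)=2$ and lies in a limb, then $v$ is not a maximizer. Since $T$ is not a path, the limb containing $v$ has the form $\ell_0-\ell_1-\cdots-\ell_k-b$, where $\ell_0$ is a leaf, $\ell_1,\ldots,\ell_k$ are the degree-$2$ limb vertices, and $b$ is a branching vertex with $\deg_T(b)\ge 3$. Write $v=\ell_j$ for some $1\le j\le k$ and take $w=\ell_{j-1}$, the neighbour of $v$ on the leaf side. The component of $T-v$ containing $w$ is the sub-path $\ell_0-\cdots-\ell_{j-1}$ (a single vertex when $j=1$), so the equality case of \eqref{eq:0} gives $i(w;v)=\tfrac{1}{2}$. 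The component of $T-w$ containing $v$, on the other hand, still contains $b$ together with all of $b$'s original $\ge3$ neighbours, so it is neither a leaf nor a path, and \eqref{eq:0} is strict, yielding $i(v;w)<\tfrac{1}{2}$. The comparison formula then gives $\mu_T(w)>\mu_T(v)$.

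Thus no degree-$2$ limb vertex attains the maximum, and the Wagner--Wang restriction to $\deg_T\in\{1,2\}$ forces the maximizer to be either a leaf or a degree-$2$ vertex outside every limb---that is, one lying on a core-path. The only point that needs a moment's care is verifying that $b$ still has degree $\ge 3$ in the relevant component of $T-w$; this is immediate, since $w=\ell_{j-1}$ lies strictly between $\ell_0$ and $b$ on the limb and is incident to no edge at $b$. That observation is also where the hypothesis ``$T$ is not a path'' is genuinely used: without it the branching vertex $b$ would not have to exist.
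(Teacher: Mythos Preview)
Your proof is correct and follows essentially the same route as the paper: both arguments rule out a degree-$2$ limb vertex $v$ by moving one step toward the leaf end of the limb, using that the leaf-side index equals $\tfrac12$ while the branch-side index is strictly smaller, and then invoking the Index Lemma to conclude the neighbour has strictly larger local mean. Your explicit derivation of the comparison formula $\mu_T(w)-\mu_T(v)=i(w;v)-i(v;w)$ is exactly the computation the paper carries out inline as $\mu(u)=\mu(v,u)-i(v;u)>\mu(v,u)-i(u;v)=\mu(v)$.
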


\begin{proof}
We only need to eliminate the possibility of $v$ being a limb vertex.
Suppose $v$ lies in a limb and $v$ is not a leaf. Denote the neighbour
of $v$ closer to the leaf by $u.$ Observe that the component containing
$u$ in $T-v$ is a path, while the component containing $v$ in $T-u$
is not, due to the existence of the branching vertex at the other
end of the leaf path. So we have $i\left(u;v\right)=1/2$ and $i\left(v;u\right)<1/2.$
Then by the Index Lemma, $\mu(u)=\mu\left(v,u\right)-i\left(v;u\right)>\mu\left(v,u\right)-i\left(u;v\right)=\mu(v).$
This contradicts the assumption that $v$ achieves maximal local mean.
\end{proof}
Next, we state the aforementioned result regarding the $k$-subtree
that achieves maximal local mean.
\begin{thm}
\label{thm:mainthm}Let $S\subseteq T$ with $|S|=k$ where $k$ is
some positive integer. If $S$ is a $k$-maximal subtree, then $S$
contains at most one leaf with $\deg_{T}>2$ and at least one leaf
with $\deg_{T}\leq2.$ 

In particular, when $k=1,$ the vertex that achieves the maximum of
the local mean has degree 1 or 2.
\end{thm}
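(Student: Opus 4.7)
The plan is to use the Index Lemma in a swap argument: given a $k$-maximal $S$ with a leaf $u$ of $T$-degree $>2$, I will exhibit an external neighbour $w$ of $u$ and another leaf $u'$ of $S$ for which $\mu_T(S+w-u')>\mu_T(S)$, contradicting maximality. The main technical input is the following \emph{index inequality}: for any rooted tree $T_u$ with root $u$ having $m\ge2$ children $w_1,\dots,w_m$, writing $T_j$ for the rooted subtree at the child $w_j$, one has $\max_j i(T_j)>i(T_u)$. To prove it, set $N_j=N_{T_j}(w_j)$, $R_j=R_{T_j}(w_j)$, and $\alpha_j=R_j/(N_j+1)$, so that $i(T_j)=\alpha_j/N_j$ while $i(T_u)=(1+\sum_l\alpha_l)/\bigl(\prod_l(N_l+1)+1\bigr)$ by (\ref{eq:2}) and (\ref{eq:3}). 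Choosing $j^\ast$ to maximise $\alpha_j/N_j$, the bound $\alpha_l\le\alpha_{j^\ast}N_l/N_{j^\ast}$ reduces the claim to $(\alpha_{j^\ast}/N_{j^\ast})\cdot\bigl(\prod_l(N_l+1)+1-\sum_lN_l\bigr)>1$. Expanding yields $\prod_l(N_l+1)+1-\sum_lN_l=2+\sum_{|A|\ge2}\prod_{l\in A}N_l\ge N_{j^\ast}+2$ when $m\ge2$, and the crude estimate $\alpha_{j^\ast}/N_{j^\ast}\ge1/(N_{j^\ast}+1)$ (from $\mu_{T_{j^\ast}}(w_{j^\ast})\ge1$) makes the product at least $(N_{j^\ast}+2)/(N_{j^\ast}+1)>1$.

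For the ``at most one leaf with $\deg_T>2$'' assertion, suppose $S$ had distinct leaves $u,u'$ with $\deg_T(u),\deg_T(u')>2$. Since $u$ is a leaf of $S$ with $\deg_T(u)-1\ge2$ external neighbours in $T$, applying the index inequality to the rooted subtree of $T$ at $u$ growing away from $S$ yields a neighbour $w\notin S$ of $u$ with $i(w;S)>i(u;S)$. Because $T$ is a tree and $u\ne u'$, $w$ cannot also be adjacent to $u'$, so $u'$ is still a leaf of $S+w$, the set $S+w-u'$ is a subtree of order $k$, and $i(u';S+w)=i(u';S)$. Two applications of the Index Lemma give $\mu_T(S+w-u')-\mu_T(S)=i(w;S)-i(u';S)$, so $k$-maximality forces $i(w;S)\le i(u';S)$; combined with $i(w;S)>i(u;S)$, this gives $i(u';S)>i(u;S)$. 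Interchanging $u$ and $u'$ gives the opposite strict inequality, a contradiction.

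The ``at least one leaf with $\deg_T\le2$'' assertion is immediate: if every leaf of $S$ had $\deg_T>2$, the previous paragraph would force $|S|=1$, reducing to Theorem~\ref{thm:oldthm}. The main obstacle is the index inequality; once that is established, the rest is a compact application of the Index Lemma together with the elementary fact that distinct leaves of a subtree in a tree share no external neighbour.
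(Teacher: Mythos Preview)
Your proof is correct. Your ``index inequality'' is exactly Theorem~\ref{thm:indexchange_branching} of the paper, and your swap-and-symmetry argument (if $u,u'$ are two leaves of $S$ with $\deg_T>2$, pick an outer neighbour of the one with larger index and swap it for the other leaf) is precisely the short alternative proof the paper records at the opening of Section~5.

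It is worth noting, however, that this is \emph{not} the proof the paper gives immediately after the statement. There the argument is more direct and computational: one fixes a neighbour $w$ of $S$ minimising $N_T(w;S)$, takes any leaf $v$ of $S$ with $\deg_T(v)>2$ not adjacent to $w$, and shows $i(w;S)>i(v;S)$ by bounding $R(w)\ge 2N(w)$ and expanding $i(v;S)$ via (\ref{eq:2})--(\ref{eq:3}). That proof avoids the symmetry trick and the general index-monotonicity lemma, at the cost of a longer estimate; conversely, your route isolates the structural fact (branching strictly lowers the index) as a reusable lemma, from which the theorem falls out in two lines. Your proof of the index inequality is in fact a little cleaner than the paper's proof of Theorem~\ref{thm:indexchange_branching}: by using only $\mu_{T_{j^\ast}}(w_{j^\ast})\ge 1$ and the crude bound $\prod_l(N_l+1)+1-\sum_l N_l\ge N_{j^\ast}+2$, you sidestep the case split on whether $w_j$ is (or neighbours) a leaf that the paper performs.
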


\begin{proof}
In the case $k=1,$ the statement is first proved in \cite[Theorem 3.2]{MR3433637}
and an alternative proof will be given in Theorem \ref{thm:oldthm}.
Assume $k>1.$ To start, fix a neighbour $w$ of $S$ that has the
minimal $N_{T}\left(w;S\right),$ i.e., $N_{T}\left(w;S\right)\leq N_{T}\left(a;S\right)$
for any neighbour $a$ of $S.$ Note that the vertex adjacent to $w$
inside $S$ could be a leaf of $S$ or not. Next we pick a leaf of
$S$ with $\deg_{T}>2$ and not adjacent to $w.$ If there exists
no such leaf in $S,$ then the statement is automatically true. Now
assume $v$ is such a leaf of $S$ with $\deg_{T}>2.$ Then $i\left(v;S\right)<\frac{1}{2}$
by (\ref{eq:0}) as it cannot be a limb vertex. The strategy is to
show that including $w$ and removing $v$ increases the local mean
and then reach a contradiction. By the Index Lemma, this is equivalent
to $i\left(w\right)>i\left(v\right),$ i.e.,
\begin{equation}
\frac{R\left(w\right)}{N\left(w\right)\left(1+N\left(w\right)\right)}>\frac{R\left(v\right)}{N\left(v\right)\left(1+N\left(v\right)\right)}.\label{eq:7}
\end{equation}
(The subscript $T$ is dropped as there is no risk of ambiguity.)
Now, for the purpose of establishing an inequality, we exclude two
cases: either $w$ is itself a leaf in $T,$ or the only neighbour
of $w$ outside of $S$ is a leaf in $T.$ In both cases, $i\left(w\right)=\frac{1}{2}>i\left(v\right)$
and the statement follows. Now with these two cases excluded, one
has
\[
R\left(w\right)\geq2N\left(w\right).
\]
Indeed, in all cases other than the two above, there is at least one
subtree rooted at $w$ (outside of $S$) having order 3, and all other
such subtrees, except the singleton subtree $\left\{ w\right\} ,$
have order at least 2. Hence, 
\[
R\left(w\right)\geq1+3+2\left(N\left(w\right)-2\right)=2N\left(w\right).
\]
So the left hand side of (\ref{eq:7}) satisfies
\[
\frac{R\left(w\right)}{N\left(w\right)\left(1+N\left(w\right)\right)}\geq\frac{2}{1+N\left(w\right)}.
\]
Denote all the outgoing (with respect to $S$) \foreignlanguage{american}{neighbours}
of $v$ by $v_{1},\ldots,v_{l},$ and let $N_{i}:=N\left(v_{i};v\right),R_{i}:=R\left(v_{i};v\right).$
From the choice of $w,$ we know that $N\left(w\right)\leq N_{i}$
for all $i\in\left\{ 1,\ldots,l\right\} .$ Continuing with the right
hand side of (\ref{eq:7}), we have
\begin{align*}
\frac{R\left(v\right)}{N\left(v\right)\left(1+N\left(v\right)\right)} & =\left(1+\sum_{i=1}^{l}\frac{R_{i}}{1+N_{i}}\right)\frac{1}{1+\prod_{i=1}^{l}\left(N_{i}+1\right)}\\
 & \leq\frac{1+\sum_{i=1}^{l}\frac{N_{i}}{2}}{1+\prod_{i=1}^{l}\left(N_{i}+1\right)},
\end{align*}
where the first step used equations (\ref{eq:2}) and (\ref{eq:3}),
and the second step used (\ref{eq:0}). What is left to show is 
\begin{align}
\frac{2}{1+N\left(w\right)} & >\frac{1+\sum_{i=1}^{l}\frac{N_{i}}{2}}{1+\prod_{i=1}^{l}\left(N_{i}+1\right)}\nonumber \\
2 & >\frac{\left(1+\sum_{i=1}^{l}\frac{N_{i}}{2}\right)\left(1+N\left(w\right)\right)}{1+\prod_{i=1}^{l}\left(N_{i}+1\right)},\label{eq: eqinproof3.3}
\end{align}
where $1\leq N\left(w\right)\leq N_{i},$ $l\geq2$ (this follows
from the assumption $\deg_{T}>2$). For simplicity, denote $N:=N\left(w\right)$
for the rest of the proof.

The right hand side of (\ref{eq: eqinproof3.3}) gives
\[
\frac{\left(1+\sum_{i=1}^{l}\frac{N_{i}}{2}\right)\left(1+N\right)}{1+\prod_{i=1}^{l}\left(N_{i}+1\right)}=\frac{1+\frac{1}{2}\sum_{i=1}^{l}N_{i}+N+\frac{1}{2}\sum_{i=1}^{l}N_{i}N}{2+\sum_{i=1}^{l}N_{i}+\sum_{i,j=1,i<j}^{l}N_{i}N_{j}+\left(\textrm{higher order terms}\right)}.
\]
Since $N\leq N_{i}$ for all $i,$ if one collects and compares terms
from numerator and denominator by their powers, one has
\begin{align*}
\textrm{linear terms:} & \;\frac{1}{2}\sum_{i=1}^{l}N_{i}+N\leq\sum_{i=1}^{l}N_{i},\\
\textrm{quadratic terms:} & \;\frac{1}{2}\sum_{i=1}^{l}N_{i}N\leq\sum_{i=1,i<j}^{l}N_{i}N_{j},
\end{align*}
for all $l\geq2$ and positive integers $N_{i}$ and $N$ such that
$N\le N_{i}$ for all $i.$ Hence the right hand side of (\ref{eq: eqinproof3.3})
is actually less than 1 thus trivially less than 2, and the statement
follows.
\end{proof}
\begin{rem*}
By the above theorem, one of the following two descriptions regarding
leaves of a $k$-maximal subtree $S\subseteq T$ is true:
\end{rem*}
\begin{enumerate}
\item[case I:] all leaves of $S$ have $\deg_{T}\leq2,$ 
\item[case II:] there is only one leaf of $S$ with $\deg_{T}>2$ and one of its
neighbours outside of $S,$ say $w,$ satisfies $N_{T}\left(w;S\right)\leq N_{T}\left(a;S\right)$
for every neighbour $a$ of $S.$ 
\end{enumerate}
\hfill \qedsymbol

The next example shows that both situations can occur.
\begin{example*}
Let $T$ be the tree obtained by connecting the \foreignlanguage{british}{centres}
of two 2-stars $K_{1,2}$ by a path of $2n$ vertices $P_{2n}.$ Let
$v,w$ be the two adjacent vertices in the middle of the path, as
shown in Figure~\ref{fig:bothcase}. 
\end{example*}
\begin{figure}[h]
\captionsetup{justification=centering}
\begin{center}     
\begin{tikzpicture}     
	{ 	
	\draw[thick] (1,0)--(2,0);
	\node at (0.5,0) {$\ldots$};
	\node at (2.5,0) {$\ldots$};
	\node at (1,-0.3) {$v$};
	\node at (2,-0.3) {$w$};
	
	\draw [decorate,decoration={brace,amplitude=4pt},xshift=0pt,yshift=0pt] (0,0.3) -- (1,0.3) node [black,midway,yshift=0.35cm]{nodes};
	\node at (0.5,0.95) {$n+1$};

	\draw [decorate,decoration={brace,amplitude=4pt},xshift=0pt,yshift=0pt] (2,0.3) -- (3,0.3) node [black,midway,yshift=0.35cm]{nodes};
	\node at (2.5,0.95) {$n+1$};

	\draw[thick] (4,0.6)--(3,0) -- (4,-0.6);    
	
	\draw[fill] (0,0) circle (0.08);     
	\draw[fill] (1,0) circle (0.08);
	\draw[fill] (2,0) circle (0.08);
	\draw[fill] (3,0) circle (0.08);
	\node at (3,-0.3) {$c$};
	\node at (4.3,-0.6) {$d$};
	     
	\draw[fill] (4,0.6) circle (0.08);
	\draw[fill] (4,-0.6) circle (0.08); 

	\draw[thick] (-1,0.6)--(0,0) -- (-1,-0.6); 
     
	\draw[fill] (-1,0.6) circle (0.08);

	\draw[fill] (-1,-0.6) circle (0.08);

	} 	
	\end{tikzpicture}\\

\end{center} 
\caption{$n=1$: \{$c,d$\} is 2-maximal; $n>1$: \{$v,w$\} is 2-maximal. }\label{fig:bothcase} \end{figure}
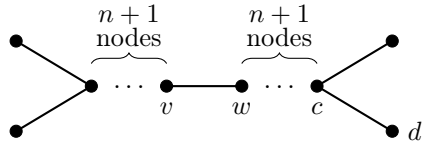

First, we have $\mu_{T}\left(v,w\right)\geq\mu_{T}\left(a,b\right)$
for any adjacent pair $a,b$ from the path. (The proof is postponed
till Section 5 (Figure~\ref{fig:twostars}), where we consider a more
general setup.) Moreover, by the bijection induced by the contraction
and equation (\ref{eq:5}), the calculation in \cite{MR3433637} implies
that $\{v,w\}$ has a local mean (at order 2) of
\[
\mu_{T}(v,w)=1+\left(n+2\right)\left(n+6\right)/\left(n+4\right).
\]
In order to determine which 2-subtree achieves the maximum of the
local mean at order 2 in $T,$ we need to calculate $\mu_{T}(c,d).$
We have

\begin{align*}
\mu_{T}\left(c,d\right) & =1+\mu_{T-d}\left(c\right)\\
 & =1+1+\frac{1}{2}+\frac{\left(1+2\cdot2+3\right)+2n\cdot4+\left(1+2+\cdots+2n\right)}{2n+4+1}\\
 & =\frac{4n^{2}+28n+41}{4n+10},
\end{align*}
where the second step follows from (\ref{eq:3}). Taking the difference,
\[
\mu_{T}\left(v,w\right)-\mu_{T}\left(c,d\right)=\frac{2n^{2}+n-4}{\left(4n+10\right)\left(n+4\right)}.
\]
To conclude, when $n=1,$ the subtree $\{c,d\},$ which has one leaf
of degree 1 and the other of degree 3, is 2-maximal in $T;$ when
$n>1,$ the subtree $\{v,w\},$ whose both leaves are of degree 2,
is 2-maximal in $T.$ \hfill \qedsymbol

Now, a natural question to ask is: 
\begin{quote}
When does a $k$-maximal subtree $S$ contain a leaf with $\deg_{T}>2?$ 
\end{quote}
The next theorem tells us that a necessary condition for it to occur
is that any other leaf of $S$ is actually a leaf of $T.$
\begin{thm}
\label{thm:refinement}Let $S\subseteq T$ be a $k$-maximal subtree.
If there exists a leaf $w\in S$ with $\deg_{T}>2,$ then all other
leaves of $S$ have $\deg_{T}=1.$ 
\end{thm}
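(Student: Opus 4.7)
I would prove Theorem \ref{thm:refinement} by contradiction. Suppose $S$ is $k$-maximal, has a leaf $y$ with $\deg_T(y) > 2$, and also has another leaf $v$ with $\deg_T(v) = 2$. By Theorem \ref{thm:mainthm}, $y$ is the only leaf of $S$ with degree larger than $2$. Let $v'$ denote the unique outside neighbour of $v$, let $U$ be the outside branch of $v$ (i.e.\ the component of $T-S$ containing $v'$, rooted at $v'$), and let $w_1,\dots,w_l$ (with $l \geq 2$) be the outside neighbours of $y$ with corresponding branches $W_1,\dots,W_l$. The aim is to produce a $k$-subtree $S'$ with $\mu_T(S') > \mu_T(S)$, contradicting $k$-maximality.

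Two boundary swaps preserve order $k$ and connectedness: $S_A := S - v + w_j$ for some $j$ (connected since $w_j$ is adjacent to $y \in S-v$), and $S_B := S - y + v'$ (connected since $v'$ is adjacent to $v \in S-y$). Because removing a leaf of $S$ does not alter the outward branch seen from the opposite side, a direct check gives $i(w_j; S-v) = i(W_j)$ and $i(v'; S-y) = i(U)$, so by the Index Lemma
\[
\mu_T(S_A) - \mu_T(S) = i(W_j) - i(v;S), \qquad \mu_T(S_B) - \mu_T(S) = i(U) - i(y;S).
\]
The plan is to show that at least one of these differences is strictly positive.

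Using equations (\ref{eq:2}) and (\ref{eq:3}), one checks the identities
\[
i(v;S) = \frac{1 + n_U \cdot i(U)}{n_U + 2}, \qquad i(y;S) = \frac{1 + \sum_{j=1}^{l} i(W_j) n_j}{\prod_{j=1}^{l}(n_j+1) + 1},
\]
where $n_U = N_U(v')$ and $n_j = N_{W_j}(w_j)$. From the first identity and (\ref{eq:0}), $i(v;S) = 1/2$ iff $i(U) = 1/2$ iff $U$ is a leaf or path. So if $U$ is a leaf or path, then $i(U) = 1/2$ but $i(y;S) < 1/2$ (the branch at $y$ is not a path since $l \geq 2$), and Swap $S_B$ gives strict increase. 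If $U$ is not a leaf or path but some $W_j$ is, then $i(W_j) = 1/2 > i(v;S)$, and Swap $S_A$ gives strict increase.

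The remaining case is when neither $U$ nor any $W_j$ is a leaf or path, so $n_U \geq 4$, $n_j \geq 4$ for all $j$, and every index is strictly less than $1/2$; this will be the main obstacle. Here I would invoke Theorem \ref{thm:mainthm} (Case~II) to find an outside neighbour $w$ of $S$ adjacent to $y$ with $n_w \leq n_a$ for every outside neighbour $a$ of $S$; in particular $n_w \leq n_U$ and $n_w \leq n_j$ for all $j$. The bound $R_T(w;S) \geq 2 n_w$ from the proof of Theorem \ref{thm:mainthm} applies here (since $W_w$ is neither a leaf nor a 2-vertex path), so $i(W_w) \geq 2/(n_w+1)$. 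Assuming for contradiction that both swaps fail, the inequality $i(W_w) \leq i(v;S)$ combined with the identity for $i(v;S)$ forces $i(U) \geq (2n_U - n_w + 3)/((n_w+1)n_U)$, while $i(U) \leq i(y;S)$ combined with $i(W_j) \leq i(v;S)$ and the identity for $i(y;S)$ forces an upper bound on $i(U)$ that decays with $\prod_j(n_j+1)$. The crucial ingredients for incompatibility are the exponential growth $\prod_j(n_j+1) \geq (n_w+1)^l \geq 25$ (using $l \geq 2$ and $n_w \geq 4$) together with the Case~II minimum condition $n_j \geq n_w$, which together pin the upper bound well below the lower bound; a careful computation (whose main step is cross-multiplying and using $\sum_j n_j \leq l \cdot \max_j n_j$) closes this case.
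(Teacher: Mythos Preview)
Your overall strategy coincides with the paper's: set up the same two swaps (remove the degree-$2$ leaf $v$ and add an outer neighbour $w_j$ of the high-degree leaf $y$, or remove $y$ and add the outer neighbour $v'$ of $v$), dispose of the limb cases via the $\tfrac12$-index observation, and in the main case (all outer branches non-paths, so all $n_j,n_U\ge 4$) show the resulting constraints are incompatible. The paper's execution of that hard case is organised differently from yours: rather than borrowing the bound $R(w)\ge 2N(w)$ and squeezing $i(U)$ between an explicit lower and upper bound, the paper first proves two structural inequalities exploiting $\deg_T(v)=2$, namely $2\,i(v')>i(v;S)$ and $i(v')\sum_j n_j>1$, and then chains them to obtain
\[
i(y;S)\;<\;\frac{3\,i(v')\sum_j n_j}{\prod_j(n_j+1)+1}\;<\;i(v')
\]
directly, the last step using only that $3\sum_j n_j<\prod_j(n_j+1)+1$ when $l\ge 2$ and each $n_j\ge 4$.

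Your route is sound, but the final sketch is thinner than it looks. After cross-multiplying and using $n_w\le n_U$, the incompatibility you need reduces to
\[
\prod_j(n_j+1)\;>\;2\sum_j n_j+n_w,
\]
which does hold for $l\ge 2$ and $n_j\ge 4$. However, the hint ``$\sum_j n_j\le l\cdot\max_j n_j$'' alone does not close this; you also need a matching product lower bound such as $\prod_j(n_j+1)\ge 5^{\,l-1}(\max_j n_j+1)$ to compare the two sides. So the plan works, but the ``careful computation'' deserves to be written out in full. The paper's pair of auxiliary inequalities handles this stretch more transparently and avoids having to control $n_U$ and the $n_j$ simultaneously.
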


\begin{proof}
Let $w_{1},\ldots,w_{d}$ be the \foreignlanguage{british}{neighbours}
of $w$ outside of $S.$ From the assumption, we know $d\geq2.$ Denote
the components in $T-S$ containing $w_{j}$ by $T_{j}.$ Assume,
for the purpose of contradiction, that $v\neq w$ is a leaf of $S$
with $\deg_{T}>1.$ By Theorem \ref{thm:mainthm}, we know $\deg_{T}\left(v\right)=2.$
Denote the only neighbour of $v$ outside of $S$ by $v_{1},$ and
its corresponding component in $T-S$ by $C_{1},$ as shown in Figure~\ref{fig:refineproof}.
From the remark right after the proof of Theorem \ref{thm:mainthm},
we know that one of $w_{1},\ldots,w_{d}$ has the least number of
rooted subtrees outside of $S$ among all neighbours of $S.$ Without
loss of generality, let it be $w_{1}.$ Then we have $N\left(w_{1};S\right)\leq N\left(w_{j};S\right),j=2,\ldots,d,$
and $N\left(w_{1};S\right)\leq N\left(v_{1};S\right).$ For simplicity,
we drop the reference to $S$ in this proof.

\begin{figure}[h]
\captionsetup{justification=centering}
\begin{center}     
\begin{tikzpicture}     
	{ 	
	
	\node at (0,-0.1) {$\ldots$};
	\draw[thick] (-0.6,0.5)--(-0.3,0); 
	\draw[thick] (0.6,0.5)--(0.3,0);

	\node at (-0.8,0.35) {$v$};
	\node at (0.85,0.35) {$w$};   
	
	\draw[fill] (-0.6,0.5) circle (0.07);     
	\draw[fill] (0.6,0.5) circle (0.07);
	\draw[fill] (-0.9,1.2) circle (0.07);
	\draw[fill] (0.3,1.2) circle (0.07);
	\draw[fill] (1.2,1.2) circle (0.07);

	\node at (-0.9,1.5) {$v_1$};
	\node at (0.3,1.5) {$w_1$};
	\node at (1.2,1.5) {$w_d$};

	\node at (-0.9,2) {$\vdots$};
	\node at (0.3,2) {$\vdots$};
	\node at (1.2,2) {$\vdots$};

	\node at (-0.9,2.3) {$C_1$};
	\node at (0.3,2.3) {$T_1$};
	\node at (1.2,2.3) {$T_d$};

	\node at (0.75,1.15) {$\ldots$};

	\draw[thick,dashed] (0.6,0.5)--(1.2,1.2); 
	\draw[thick,dashed] (0.6,0.5)--(0.3,1.2);
	\draw[thick,dashed] (-0.6,0.5)--(-0.9,1.2);

	\draw[dashed,anchor=center] (-1.28,2.3) arc (180:360:0.4cm and 1.3cm);
	\draw[dashed,anchor=center] (-0.05,2.3) arc (180:360:0.35cm and 1.3cm);
	\draw[dashed,anchor=center] (0.87,2.3) arc (180:360:0.35cm and 1.3cm);

	\draw[dashed,anchor=center] (-1.45,0) arc (180:0:1.5cm and 0.8cm);

	\node at (0,0.3) {$S$};
	
	} 	
	\end{tikzpicture}\\

\end{center} 
\caption{$\text{deg}_T(v)=2$ and $\text{deg}_T(w)>2$.}\label{fig:refineproof} \end{figure}

Since $S$ is $k$-maximal, from the Index Lemma, we know $i\left(v\right)\geq i\left(w_{j}\right),j=1,\ldots,d.$
Equivalently,
\[
\frac{R\left(w_{j}\right)}{N\left(w_{j}\right)+1}\leq i\left(v\right)N\left(w_{j}\right).
\]
Assume that $v$ is a limb vertex, i.e. $i\left(v\right)=\frac{1}{2},$
which implies $i\left(v_{1}\right)=\frac{1}{2}.$ With $d\geq2,$
$w$ cannot be a limb vertex, so $i\left(w\right)<\frac{1}{2}.$ Hence,
$i\left(v_{1}\right)>i\left(w\right),$ which means one can obtain
a higher local mean of order $k$ by removing $w$ and including $v_{1},$
contradicting that $S$ is $k$-maximal. So, $i\left(v\right)<\frac{1}{2}$
and it follows that $i\left(w_{j}\right)<\frac{1}{2}$ for all $j=1,\ldots,d.$
Then $N\left(w_{j}\right)\geq4$ for all $j=1,\ldots,d.$ Indeed,
as $w_{j}$ is not a limb vertex, the smallest case for $T_{j}$ is
the 3-star $K_{2,1}$ with $w_{j}$ at the centre, and the number
of subtrees containing the centre is 4.

What is left to show is that $i\left(v_{1}\right)>i\left(w\right).$
We start with the observations that $N\left(v\right)=N\left(v_{1}\right)+1$
and $R\left(v\right)=R\left(v_{1}\right)+N\left(v_{1}\right)+1.$
So we have
\begin{align*}
i\left(v_{1}\right) & =\frac{R\left(v_{1}\right)}{N\left(v_{1}\right)\left(N\left(v_{1}\right)+1\right)}\\
i\left(v_{1}\right) & =\frac{R\left(v\right)-N\left(v\right)}{\left(N\left(v\right)-1\right)N\left(v\right)}\\
i\left(v_{1}\right) & =\frac{R\left(v\right)}{\left(N\left(v\right)-1\right)N\left(v\right)}-\frac{1}{N\left(v\right)-1}\\
i\left(v_{1}\right) & >i\left(v\right)-\frac{1}{N\left(v_{1}\right)}\\
i\left(v_{1}\right)+\frac{1}{N\left(v_{1}\right)} & >i\left(v\right)\\
2\,i\left(v_{1}\right) & >i\left(v\right).
\end{align*}
Indeed, the last inequality follows from $i\left(v_{1}\right)>\frac{1}{N\left(v_{1}\right)},$
which is further equivalent to $\mu_{C_{1}}\left(v_{1}\right)>\frac{N\left(v_{1}\right)+1}{N\left(v_{1}\right)}.$
The last inequality holds trivially given that $N\left(v_{1}\right)\geq4.$
Indeed, $N\left(v_{1}\right)\geq4$ implies that $C_{1}$ has at least
3 vertices, then $\mu_{C_{1}}\left(v_{1}\right)\ge\frac{3+1}{2}>\frac{5}{4}\ge\frac{N\left(v_{1}\right)+1}{N\left(v_{1}\right)}.$
On the other hand,
\begin{align*}
i\left(w\right) & =\frac{R\left(w\right)}{N\left(w\right)\left(N\left(w\right)+1\right)}\\
 & =\frac{1+\sum_{j=1}^{d}\frac{R\left(w_{j}\right)}{N\left(w_{j}\right)+1}}{\prod_{j=1}^{d}\left(N\left(w_{j}\right)+1\right)+1}\\
 & =\frac{1+\sum_{j=1}^{d}i\left(w_{j}\right)N\left(w_{j}\right)}{\prod_{j=1}^{d}\left(N\left(w_{j}\right)+1\right)+1}\\
 & \leq\frac{1+\sum_{j=1}^{d}i\left(v\right)N\left(w_{j}\right)}{\prod_{j=1}^{d}\left(N\left(w_{j}\right)+1\right)+1}\\
 & =\frac{1+i\left(v\right)\sum_{j=1}^{d}N\left(w_{j}\right)}{\prod_{j=1}^{d}\left(N\left(w_{j}\right)+1\right)+1}\\
 & <\frac{1+2\,i\left(v_{1}\right)\sum_{j=1}^{d}N\left(w_{j}\right)}{\prod_{j=1}^{d}\left(N\left(w_{j}\right)+1\right)+1}.
\end{align*}
Next, we claim that $1<i\left(v_{1}\right)\sum_{j=1}^{d}N\left(w_{j}\right)$
and postpone the proof to the end. With this inequality, one proceeds
as follows:
\begin{align*}
i\left(w\right) & <\frac{3\,i\left(v_{1}\right)\sum_{j=1}^{d}N\left(w_{j}\right)}{\prod_{j=1}^{d}\left(N\left(w_{j}\right)+1\right)+1}\\
 & =\frac{3\sum_{j=1}^{d}N\left(w_{j}\right)}{\prod_{j=1}^{d}\left(N\left(w_{j}\right)+1\right)+1}i\left(v_{1}\right)\\
 & <i\left(v_{1}\right).
\end{align*}
To see the last step, recall that $N\left(w_{j}\right)\geq4$ for
all $j$ and $d\geq2.$ For each fixed $d,$ the coefficient of $i\left(v_{1}\right)$
on the right hand side is a decreasing function with respect to each
$N(w_{j}).$ Indeed, if we temporarily denote $N(w_{j})$ by $x_{j}$
and the entire coefficient part of $i(v_{1})$ by $y,$ then taking
the derivative of $y$ with respect to $x_{i}$ leads to
\begin{align*}
\frac{\partial y}{\partial x_{i}} & =\frac{\partial}{\partial x_{i}}\left(\frac{3\sum_{j=1}^{d}x_{j}}{\prod_{j=1}^{d}\left(x_{j}+1\right)+1}\right)\\
 & =\frac{3\left(\prod_{j\neq i}\left(x_{j}+1\right)+1\right)-3\sum_{j\neq i}x_{j}\prod_{j\neq i}\left(x_{j}+1\right)}{\left(\prod_{j=1}^{d}\left(x_{j}+1\right)+1\right)^{2}}\\
 & =\frac{3\prod_{j\neq i}\left(x_{j}+1\right)\left(1-\sum_{j\neq i}x_{j}\right)+3}{\left(\prod_{j=1}^{d}\left(x_{j}+1\right)+1\right)^{2}},
\end{align*}
which is negative if $x_{j}\geq4$ for all $j.$ Therefore, it suffices
to show that $y$ is less than 1 for every $d\geq2$ when $x_{j}=4$
for all $j.$ But in this case, the coefficient $y$ reduces to $\frac{12d}{5^{d}+1},$
which is obviously less than 1 for all $d\geq2.$ 

At last, we prove the claim $1<i\left(v_{1}\right)\sum_{j=1}^{d}N\left(w_{j}\right).$
We know that for all $j=1,\ldots,d,$
\begin{align*}
i\left(v\right) & \geq i\left(w_{j}\right)\\
\frac{\mu\left(v\right)}{N\left(v\right)+1} & \geq\frac{\mu_{T_{j}}\left(w_{j}\right)}{N\left(w_{j}\right)+1}\\
\frac{1+\frac{R\left(v_{1}\right)}{N\left(v_{1}\right)+1}}{N\left(v_{1}\right)+2} & \geq\frac{\mu_{T_{j}}\left(w_{j}\right)}{N\left(w_{j}\right)+1}\\
\frac{R\left(v_{1}\right)}{\left(N\left(v_{1}\right)+1\right)\left(N\left(v_{1}\right)+2\right)} & \geq\frac{\mu_{T_{j}}\left(w_{j}\right)}{N\left(w_{j}\right)+1}-\frac{1}{N\left(v_{1}\right)+2}.
\end{align*}
Then with $N\left(v_{1}\right)\geq N\left(w_{1}\right),$ it follows
that
\begin{align*}
i\left(v_{1}\right) & =\frac{R\left(v_{1}\right)}{N\left(v_{1}\right)\left(N\left(v_{1}\right)+1\right)}\\
 & >\frac{R\left(v_{1}\right)}{\left(N\left(v_{1}\right)+1\right)\left(N\left(v_{1}\right)+2\right)}\\
 & \geq\frac{\mu_{T_{j}}\left(w_{j}\right)}{N\left(w_{j}\right)+1}-\frac{1}{N\left(v_{1}\right)+2}\\
 & >\frac{\mu_{T_{j}}\left(w_{j}\right)}{N\left(w_{j}\right)+1}-\frac{1}{N\left(w_{1}\right)+1}.
\end{align*}
where the third step follows from the inequality established in the
previous paragraph. In particular, for $j=1,$
\begin{align*}
i\left(v_{1}\right) & >\frac{\mu_{T_{1}}\left(w_{1}\right)-1}{N\left(w_{1}\right)+1}\\
 & >\frac{1}{\sum_{j=1}^{d}N\left(w_{j}\right)},
\end{align*}
as $N\left(w_{j}\right)\geq4$ and $\mu_{T_{1}}\left(w_{1}\right)\geq2.$
Indeed, 
\[
N\left(w_{1}\right)\geq4\Rightarrow|T_{1}|\geq3\Rightarrow\mu_{T_{1}}\left(w_{1}\right)\geq\frac{|T_{1}|+1}{2}=2.
\]
\end{proof}
At the end of this section, we state a refining result on the leaves
of $k$-maximal subtrees.
\begin{thm}
\label{thm:branchingpath_ksubtree}Let $S\subseteq T$ be a $k$-maximal
subtree. Then the leaves of $S$ with $\deg_{T}=2$ either all lie
in core-paths, or all lie in limbs.
\end{thm}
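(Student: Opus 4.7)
The plan is to argue by contradiction via a single-vertex swap. Assume $S$ is $k$-maximal and simultaneously contains a leaf $u$ with $\deg_T(u)=2$ lying in a core-path and a leaf $v$ with $\deg_T(v)=2$ lying in some limb $L$; let $\ell$ denote the leaf-endpoint of $L$ in $T$. I will construct a neighbour $w$ of $S$ so that $S':=(S-u)+w$ is a subtree of order $k$ with $\mu_T(S')>\mu_T(S)$, contradicting $k$-maximality.

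The first step is to pin down the index of $u$: since $u$'s outside-neighbour $u_1$ lies on a core-path, the component of $T-S$ containing $u_1$ extends to a vertex of $T$-degree at least $3$ (one endpoint of the core-path), so this component is not a path. Applied to the rooted tree at $u$ growing away from $S$, the bound (\ref{eq:0}) then gives $i(u;S)<\tfrac12$. To construct $w$, let $v^\ast$ be the leaf of $S$ in $L$ closest to $\ell$. The crucial observation is that $v^\ast\ne\ell$: otherwise, by connectedness $S$ would contain the entire sub-limb from $\ell$ up to $v$, and this sub-limb can only attach to the rest of $T$ at $v$ through $v$'s branch-side neighbour, which lies outside $S$; hence $S$ would be confined to the sub-limb, contradicting $u\in S$. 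Therefore $v^\ast$ is an internal limb vertex with $\deg_T(v^\ast)=2$, and the ``closest to $\ell$'' property forces its outside-neighbour $v^\ast_1$ to sit on the leaf side of $v^\ast$ in $L$. The component of $T-S$ at $v^\ast_1$ is then the path from $v^\ast_1$ to $\ell$, so $i(v^\ast_1;S)=\tfrac12$; set $w:=v^\ast_1$.

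To finish, observe that the components of $T-S$ containing $u_1$ and $v^\ast_1$ respectively are distinct (separated by the connected subtree $S$), so re-inserting $u$ into $T-(S-u)$ joins $u$ to the $u_1$-component while leaving the $v^\ast_1$-component untouched; hence $i(v^\ast_1;S-u)=i(v^\ast_1;S)=\tfrac12$. Since $v^\ast_1$ is adjacent to $v^\ast\in S-u$, the set $S'=(S-u)+v^\ast_1$ is a subtree of order $k$, and two applications of the Index Lemma yield
\[
\mu_T(S')=\mu_T(S)-i(u;S)+i(v^\ast_1;S-u)=\mu_T(S)+\tfrac12-i(u;S)>\mu_T(S),
\]
the desired contradiction. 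The step I expect to require the most care is ruling out $v^\ast=\ell$ (where connectedness of $S$ is used crucially); once that is handled, both the branch-side and leaf-side configurations of $v$ are absorbed uniformly into the choice of $v^\ast$, and everything else is a direct index computation via the Index Lemma and (\ref{eq:0}).
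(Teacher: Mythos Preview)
Your proof is correct and follows the same swap-via-Index-Lemma approach as the paper: remove the core-path leaf (index $<\tfrac12$) and adjoin a limb neighbour (index $=\tfrac12$) to obtain a $k$-subtree with strictly larger local mean. The paper's version is terser---it simply takes $w'$ to be the unique outside-neighbour of the limb leaf and asserts $i(w';S)=\tfrac12$ without further comment---whereas you explicitly justify why that neighbour lies on the leaf side of the limb. Your detour through $v^\ast$ is harmless but unnecessary: once you know $u\in S$ lies in the core, connectedness of $S$ already forces $S\cap L$ to extend all the way to the core end of $L$, so the outside-neighbour of $v$ itself is automatically on the leaf side and you may take $w$ to be that neighbour directly.
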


\begin{proof}
Suppose $v$ and $w$ are leaves of $S$ both with $\deg_{T}=2,$
and $v$ lies in a core-path, $w$ lies in a limb. Let $w'$ be the
neighbour of $w$ that does not lie in $S.$ We know $i\left(S;v\right)<1/2$
and $i\left(S;w'\right)=1/2.$ Then removing $v$ and including $w'$
yields a $k$-subtree with higher local mean, which contradicts the
assumption.
\end{proof}
This theorem states that it is impossible for the degree-2-leaves
of a $k$-maximal subtree to be a mixture of the aforementioned two
kinds. To conclude, the leaves of a $k$-maximal subtree $S\subseteq T$
can be described by one of the following three situations:
\begin{enumerate}
\item[case I(a):] a mixture of leaves of $T$ and vertices in a core-path;
\item[case I(b):] a mixture of leaves of $T$ and vertices in a limb;
\item[case II:] one leaf has $\deg_{T}>2$ and all other leaves are leaves of $T.$
\end{enumerate}
Examples of case I(a) and II are given previously. For an example
of case I(b), consider the tree obtained from first connecting the
centres of two 2-stars $K_{1,2}$ by a path of two vertices and then
extending each of the four leaves by an extra pendant vertex, as shown
in Figure~\ref{fig:case2}. 

\begin{figure}[h]
\captionsetup{justification=centering}
\begin{center}     
\begin{tikzpicture}     
	{ 	
	\draw[thick] (1,0)--(2,0);
	\draw[thick] (0,0)--(1,0);
	\draw[thick] (2,0)--(3,0);
	
	\draw[thick] (3.6,0.3)--(3,0) -- (3.6,-0.3);
	\draw[thick] (4.2,0.6)--(3.6,0.3);   
	\draw[thick] (4.2,-0.6)--(3.6,-0.3);
	\draw[thick] (-0.6,-0.3)--(-1.2,-0.6);
	\draw[thick] (-0.6,0.3)--(-1.2,0.6);
	
	\draw[fill,blue] (0,0) circle (0.08);     
	\draw[fill] (1,0) circle (0.08);
	\draw[fill] (2,0) circle (0.08);
	\draw[fill] (3,0) circle (0.08);
	     
	\draw[fill] (3.6,0.3) circle (0.08);
	\draw[fill] (3.6,-0.3) circle (0.08); 
	\draw[fill] (4.2,0.6) circle (0.08);
	\draw[fill] (4.2,-0.6) circle (0.08);

	\draw[thick,blue] (-0.6,0.3)--(0,0) -- (-0.6,-0.3); 
     
	\draw[fill,blue] (-0.6,0.3) circle (0.08);
	\draw[fill] (-1.2,0.6) circle (0.08);
	\draw[fill,blue] (-0.6,-0.3) circle (0.08);
	\draw[fill] (-1.2,-0.6) circle (0.08);
	
	} 	
	\end{tikzpicture}\\

\end{center} 
\caption{An example of case I(b).}\label{fig:case2} \end{figure}
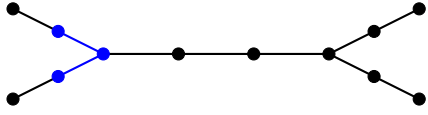

Consider the case of $k=3.$ Then the 3-subtree, shown in blue in
the figure, which contains two limb-vertices and one branching vertex
has maximal local mean at order 3 (though not unique). Indeed, other
3-subtrees that do not share the same local mean are readily seen
to have lower local mean by the Index Lemma.

\section{On indices: a detour}

The index with respect to a subtree $S$ in $T$ is well defined for
any vertex $v$ that is either a leaf of $S$ or lies outside of $S.$
In this section, we take a closer look on how indices vary among these
vertices. More specifically, for a prescribed $S,$ we compare the
index of vertex $v$ with that of its neighbour(s) that lies one step
away with respect to $S,$ i.e., an adjacent vertex whose distance
to $S$ is one greater than the distance of $v$ to $S.$ We call
such a vertex an $\mathit{outer}$ $\mathit{neighbour}$ of $v$ with
respect to $S.$ If there is no risk of ambiguity from context, then
we just say an outer neighbour of $v.$

Briefly, it is possible for an outer neighbour to have higher, lower
or equal index. A revisit to Figure~\ref{fig:exIndex} provides a
simple illustration. The two leaves to the right of $w$ both have
index $\frac{1}{2},$ while $i\left(w;S\right)=\frac{2}{5}<\frac{1}{2},$
and $i\left(v;S\right)=\frac{13}{30}>\frac{2}{5}.$ 
\begin{thm}
\label{thm:indexchange_branching}Let $S\subsetneq T$ be a proper
subtree and $v$ be either a vertex outside of $S$ or a leaf of $S.$
Let $d\geq2$ and $v_{1},\ldots,v_{d}$ be outer neighbours of $v,$
as in Figure~\ref{fig:severalfarther}. Then $i\left(v\right)<i\left(v_{j}\right)$
for any $j\in\left\{ 1,\ldots,d\right\} .$ 
\end{thm}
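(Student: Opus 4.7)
The plan is to reduce the inequality to an algebraic statement about the data $N_k := N_T(v_k;S)$ and $R_k := R_T(v_k;S)$ of the rooted branches attached at the outer neighbours of $v$, and then verify that statement using the bound $i(v_k;S) \leq \tfrac{1}{2}$ recalled in (\ref{eq:0}) together with an elementary subtree-counting lower bound for $\mu_{T_j}(v_j)$.

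First I pass to the rooted subtree $T_v$ at $v$ growing away from $S$, in which $v$ has children $v_1,\ldots,v_d$ with corresponding branches $T_1,\ldots,T_d$; since the index at $v$ (respectively $v_j$) with respect to $S$ depends only on $T_v$ (respectively $T_j$), the claim $i(v;S) < i(v_j;S)$ is just $i(T_v) < i(T_j)$. Using (\ref{eq:2}) and (\ref{eq:3}),
\[
N(T_v) = \prod_{k=1}^{d}(N_k+1), \qquad \mu_{T_v}(v) = 1 + \sum_{k=1}^{d}\frac{R_k}{N_k+1},
\]
so $i(T_v) = \mu_{T_v}(v)/(N(T_v)+1)$ while $i(T_j) = \mu_{T_j}(v_j)/(N_j+1)$. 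Writing $P_j := \prod_{k \neq j}(N_k+1)$ and $\mu(T_v^{(-j)}) := 1 + \sum_{k \neq j}R_k/(N_k+1)$ for the local mean at $v$ in the rooted tree obtained by deleting the branch $T_j$, cross-multiplication and a short rearrangement reduce the desired inequality to
\[
\mu_{T_j}(v_j)\left[P_j - 1 + \frac{2}{N_j+1}\right] > \mu(T_v^{(-j)}),
\]
which I will call $(\star)$.

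To establish $(\star)$, I upper-bound the right-hand side via (\ref{eq:0}): since $i(T_k) \leq 1/2$, one has $R_k/(N_k+1) \leq N_k/2$, and therefore $\mu(T_v^{(-j)}) \leq 1 + \tfrac{1}{2}\sum_{k \neq j}N_k$. For the left-hand side I combine the elementary identity $\prod_{k \neq j}(1+N_k) \geq 1 + \sum_{k \neq j}N_k$ (giving $P_j - 1 \geq \sum_{k \neq j}N_k$) with the following lower bound on $\mu_{T_j}(v_j)$: the singleton subtree $\{v_j\}$ has order $1$ while each of the other $N_j - 1$ subtrees at $v_j$ has order at least $2$, so $R_j \geq 2N_j - 1$, yielding $\mu_{T_j}(v_j) \geq (2N_j-1)/N_j$ whenever $N_j \geq 2$ (and trivially $\mu_{T_j}(v_j) = 1$ when $N_j = 1$). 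Substituting these estimates into $(\star)$ and splitting on $N_j = 1$ versus $N_j \geq 2$ reduces matters to a short numerical verification in $N_j$ and $\sum_{k \neq j}N_k$; the hypothesis $d \geq 2$ enters precisely through $\sum_{k \neq j}N_k \geq d - 1 \geq 1$, which is exactly the slack needed for the strict inequality.

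The main obstacle lies in the extremal configurations in which $T_j$ is a bushy rooted tree (so $i(T_j)$ is very small---think of $T_j$ as a large star at $v_j$) while each sibling branch $T_k$ is a single leaf. There the naive estimate $\mu_{T_j}(v_j) \geq 1$ leaves no slack at all, and the refined bound $\mu_{T_j}(v_j) \geq (2N_j-1)/N_j$, together with the strict version of $\prod_{k \neq j}(1+N_k) > 1 + \sum_{k \neq j}N_k$ (which holds for $d \geq 2$ thanks to the quadratic cross-terms in the expansion), is exactly what forces the strict inequality in $(\star)$ uniformly across all cases.
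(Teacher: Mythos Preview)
Your approach is essentially the paper's: both reduce $i(v)<i(v_j)$ to an algebraic inequality in the $N_k,R_k$, bound the sibling contributions via $R_k/(N_k+1)\le N_k/2$ from (\ref{eq:0}), and use a lower bound on $\mu_{T_j}(v_j)$ to finish. The paper first disposes of the cases $i(v_j)=\tfrac12$ and then works under $R_j/N_j\ge 2$, whereas you keep all cases and use the slightly weaker $\mu_{T_j}(v_j)\ge(2N_j-1)/N_j$; either route closes the computation, and your reduction to $(\star)$ is correct.

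There is, however, one wrong statement in your final paragraph: the ``strict version'' $\prod_{k\ne j}(1+N_k)>1+\sum_{k\ne j}N_k$ does \emph{not} hold for $d=2$, since then the product has a single factor and equals $1+N_k$ exactly---there are no quadratic cross-terms. Fortunately you do not actually need it. With only $P_j-1\ge\sigma:=\sum_{k\ne j}N_k$ and your bound $\mu_{T_j}(v_j)\ge(2N_j-1)/N_j$, inequality $(\star)$ reduces (after clearing denominators) to
\[
\sigma\cdot\frac{(3N_j-2)(N_j+1)}{2}>(N_j-1)(N_j-2),
\]
which holds for all $N_j\ge1$ once $\sigma\ge1$; so the slack really comes from $\sigma\ge d-1\ge1$ together with the refined mean bound, exactly as you said in the preceding paragraph. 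Just delete the appeal to a strict product inequality.
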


\begin{proof}
Let $R_{i}:=R\left(v_{i};v\right)$ and $N_{i}:=N\left(v_{i};v\right)$
for all $i\in\left\{ 1,\ldots,d\right\} .$ Let $R:=R\left(v;S\right)$
and $N:=N\left(v;S\right).$ The claim is equivalent to
\begin{equation}
\frac{1+\sum_{i=1}^{d}\frac{R_{i}}{N_{i}+1}}{\prod_{i=1}^{d}\left(N_{i}+1\right)+1}<\frac{R_{j}}{N_{j}\left(N_{j}+1\right)}.\label{eq: eqinproof3.4}
\end{equation}
As in the proof of Theorem \ref{thm:mainthm}, we exclude two cases
for $v_{j}:$ either that it is a leaf in $T,$ or that the only neighbour
of $v_{j}$ apart from $v$ is a leaf in $T.$ In both cases, the
right hand side of (\ref{eq: eqinproof3.4}) is $\frac{1}{2},$ which
is the maximal value of the index, thus the inequality automatically
holds. Hence we continue with the assumption $R_{j}/N_{j}\geq2.$
We relax the left hand side slightly,
\begin{align*}
LHS & <\frac{1+\sum\frac{R_{i}}{N_{i}+1}}{\prod\left(N_{i}+1\right)}=\frac{1+\frac{R_{j}}{N_{j}+1}+\sum_{i\text{\ensuremath{\neq}}j}\frac{R_{i}}{N_{i}+1}}{\prod\left(N_{i}+1\right)}\leq\frac{1+\frac{R_{j}}{N_{j}+1}+\sum_{i\text{\ensuremath{\neq}}j}\frac{N_{i}}{2}}{\prod\left(N_{i}+1\right)},
\end{align*}
where the summations and multiplications are taken over $i=1,\ldots,d.$
So it suffices to prove
\begin{align*}
\frac{1+\frac{R_{j}}{N_{j}+1}+\sum_{i\text{\ensuremath{\neq}}j}\frac{N_{i}}{2}}{\prod\left(N_{i}+1\right)} & \leq\frac{R_{j}}{N_{j}\left(N_{j}+1\right)}\\
1+\frac{R_{j}}{N_{j}+1}+\sum_{i\text{\ensuremath{\neq}}j}\frac{N_{i}}{2} & \leq\frac{R_{j}}{N_{j}}\prod_{i\neq j}\left(N_{i}+1\right)\\
1+\frac{1}{2}\sum_{i\text{\ensuremath{\neq}}j}N_{i} & \leq\frac{R_{j}}{N_{j}}-\frac{R_{j}}{N_{j}+1}+\frac{R_{j}}{N_{j}}\left(\sum_{i\text{\ensuremath{\neq}}j}N_{i}+\left[\textrm{higher order terms}\right]\right)\\
1 & \leq i\left(v_{j}\right)+\left(\frac{R_{j}}{N_{j}}-\frac{1}{2}\right)\sum_{i\text{\ensuremath{\neq}}j}N_{i}+\frac{R_{j}}{N_{j}}\cdot\left[\textrm{higher order terms}\right].
\end{align*}
Since $R_{j}/N_{j}\geq2$ and $d\geq2$ ensures the existence of at
least one $i\neq j,$ we have 
\[
\left(\frac{R_{j}}{N_{j}}-\frac{1}{2}\right)N_{i}\geq\frac{3}{2}\cdot1>1,
\]
and the main inequality follows.
\end{proof}
\begin{figure}[h]
\captionsetup{justification=centering}
\begin{minipage}[b]{.45\linewidth} 
\begin{center}     
\begin{tikzpicture}     
	{ 	
	\node at (-1.8,0) {$\ldots$};
	\node at (-1.8,0.25) {$S$};
	\draw[fill] (0,0) circle (0.08);     
	\draw[fill] (1,0) circle (0.08);
	\draw[thick] (-0.4,0)--(0,0) -- (1,0);
	\node at (-0.75,0) {$\ldots$};
	\node at (1.5,0) {$\ldots$};
	\node at (0,0.3) {$v$};
	\node at (1,0.3) {$w$};
	\draw[] (-1.8,0) ellipse (0.7cm and 0.5cm);	
	
	} 	
	\end{tikzpicture}\\ 
\subcaption{ Only one outer neighbour $w$: \newline $i(v)$ $\geq$ $i(w)$} 
\label{fig:onefarther} 
\end{center} 
\end{minipage}\quad\begin{minipage}[b]{.45\linewidth}
\begin{center}     
\begin{tikzpicture}     
	{ 	
	\node at (-1.8,0) {$\ldots$};
	\node at (-1.8,0.25) {$S$};
	\draw[fill] (0,0) circle (0.08);     
	\draw[fill] (1,0.4) circle (0.08);
	\draw[fill] (1,-0.4) circle (0.08);
	\draw[thick] (1,-0.4)--(0,0) -- (1,0.4);
	\node at (0,0.3) {$v$};
	\node at (1.4,0.4) {$v_1$};
	\node at (1.4,-0.4) {$v_d$};
	\draw[thick] (-0.4,0)--(0,0);
	\node at (-0.75,0) {$\ldots$};
	\node at (1,0.1) {$\vdots$};	
	\node at (1.5,0) {$\ldots$};
	\draw[] (-1.8,0) ellipse (0.7cm and 0.5cm);
	
	} 	
\end{tikzpicture}\\ 
\subcaption{Several outer neighbours $v_i$: \newline $i(v)<i(v_i)$} 
\label{fig:severalfarther} 
\end{center} 
\end{minipage} 

\caption{Change of indices with respect to $S$.}\label{fig:indexchange} \end{figure}
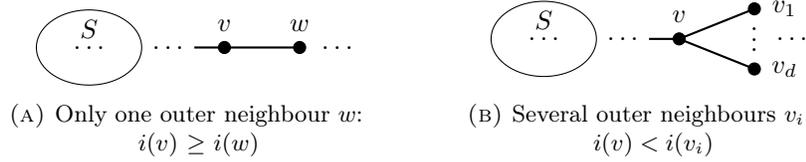
\begin{thm}
\label{thm: indexchanging_path}With notations as above, if $d=1,$
i.e., $v$ has only one outer neighbour $w$ in $T-S,$ as in Figure~\ref{fig:onefarther},
then $i\left(v\right)\geq i\left(w\right).$ The equality holds if
and only if $i\left(v\right)=i\left(w\right)=\frac{1}{2}.$
\end{thm}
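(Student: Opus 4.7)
The plan is to reduce the claim to a single algebraic identity and then invoke the universal bound $i(\cdot) \leq 1/2$ from \eqref{eq:0}.

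First, I would translate the geometric setup into algebra. Since $w$ is the only outer neighbour of $v$ with respect to $S$, the rooted tree $T_v$ at $v$ growing away from $S$ is obtained from the rooted tree $T_w$ at $w$ growing away from $S$ (equivalently, away from $v$) by appending $v$ at the root. Consequently, every rooted subtree at $v$ is either $\{v\}$ itself, or $\{v\}$ together with a rooted subtree at $w$ in $T_w$. Writing $N := N_T(w;S)$ and $R := R_T(w;S)$, this gives
\[
N_T(v;S) = N + 1, \qquad R_T(v;S) = R + N + 1.
\]

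Next I would plug these into the definition of the index and simplify. The inequality $i(v;S) \geq i(w;S)$ becomes
\[
\frac{R + N + 1}{(N+1)(N+2)} \;\geq\; \frac{R}{N(N+1)}.
\]
Clearing denominators (valid since $N \geq 1$), this reduces after routine cancellation to
\[
N(N+1) \;\geq\; 2R, \qquad \text{i.e.,} \qquad i(w;S) \leq \tfrac{1}{2},
\]
which holds by \eqref{eq:0}. This settles the inequality part.

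Finally, for the equality case, the above reduction shows that $i(v;S) = i(w;S)$ if and only if $i(w;S) = \tfrac{1}{2}$. It remains to verify that this forces $i(v;S) = \tfrac{1}{2}$ as well; substituting $R = N(N+1)/2$ back into the formula for $i(v;S)$ yields
\[
i(v;S) = \frac{N(N+1)/2 + N + 1}{(N+1)(N+2)} = \frac{(N+1)(N+2)/2}{(N+1)(N+2)} = \tfrac{1}{2},
\]
completing the characterization. I do not expect any serious obstacle here: the argument is essentially a one-line algebraic reduction, and the only substantive input is the already-established universal bound on the index.
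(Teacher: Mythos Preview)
Your proof is correct and follows essentially the same route as the paper: both compute $N_T(v;S)=N+1$, $R_T(v;S)=R+N+1$ (the paper does this implicitly via the $d=1$ case of \eqref{eq: eqinproof3.4}) and reduce the inequality $i(v)\geq i(w)$ algebraically to $N(N+1)\geq 2R$, i.e., to the known bound $i(w)\leq\tfrac12$. Your explicit verification of the equality case is slightly more detailed than the paper's, but the argument is the same.
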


\begin{proof}
Let $N:=N\left(w;S\right)$ and $R:=R\left(w;S\right).$ Reversing
the inequality in (\ref{eq: eqinproof3.4}) and setting $d=1$ yields
the following equivalent inequalities:
\begin{align*}
\frac{1+\frac{R}{N+1}}{\left(N+1\right)+1} & \geq\frac{R}{N\left(N+1\right)}\\
\frac{R+N+1}{N+2} & \geq\frac{R}{N}\\
RN+N^{2}+N-RN-2R & \geq0\\
N^{2}+N-2R & \geq0\\
\frac{1}{2} & \geq\frac{R}{N\left(N+1\right)}.
\end{align*}
Then the claim follows as the right hand side is just the index of
$w$ with respect to $S.$
\end{proof}
The picture of how indices change can be described as follows: assume
that we are traveling away from $S$ (towards leaves of $T$) and
standing at $v.$ If $\deg_{T}(v)=2,$ then the (only) outer neighbour
has a lower or equal index; if $\deg_{T}(v)>2,$ then all outer neighbours
have higher indices. The situation is summarized in Figure~\ref{fig:indexchange}.

In the last part of this section, we give an easier proof to the fact
that the vertex with the greatest local mean has degree 1 or 2, which
was first proved by Wagner and Wang \cite[Theorem 3.2]{MR3433637}.
\begin{thm}
\label{thm:oldthm}The vertex that attains maximal local mean has
degree 1 or 2.
\end{thm}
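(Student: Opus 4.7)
The plan is to argue by contradiction, leveraging the two preceding theorems on how the index varies as one moves away from a subtree. Suppose $v$ attains the maximal local mean and that $\deg_T(v) = d \geq 3$, with neighbours $w_1, \ldots, w_d$. I will derive, from maximality together with the branching inequality of Theorem~\ref{thm:indexchange_branching}, an impossible chain of strict inequalities among the indices $i(w_j; v)$. The first ingredient is the Index Lemma applied to the edge $\{v, w_j\}$, which yields
\[
\mu_T(v) - \mu_T(w_j) = i(v; w_j) - i(w_j; v).
\]
Since $\mu_T(v) \geq \mu_T(w_j)$ by maximality, it follows that $i(v; w_j) \geq i(w_j; v)$ for every neighbour $w_j$ of $v$.

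The second ingredient is Theorem~\ref{thm:indexchange_branching} applied with $S = \{w_1\}$. The vertex $v$ lies outside this $S$, and its outer neighbours with respect to $S$ are precisely $w_2, \ldots, w_d$, so there are $d - 1 \geq 2$ of them. Hence $i(v; w_1) < i(w_k; w_1)$ for every $k \in \{2, \ldots, d\}$. The one observation I would flag as requiring care is the identification $i(w_k; w_1) = i(w_k; v)$, which holds because in either case the relevant rooted subtree at $w_k$ is the component of $T - v$ containing $w_k$: the path from $w_k$ to $w_1$ passes through $v$, so rooting away from $\{w_1\}$ and rooting away from $\{v\}$ produce the same rooted tree at $w_k$.

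Combining the two ingredients gives, for every $k \neq 1$,
\[
i(w_1; v) \leq i(v; w_1) < i(w_k; v).
\]
The same argument, with $w_1$ replaced by any other neighbour $w_j$, shows $i(w_j; v) < i(w_k; v)$ whenever $k \neq j$, i.e., every neighbour's index at $v$ is strictly less than every other neighbour's index. This is absurd: one can simply choose $w_j$ so that $i(w_j; v)$ is largest among all the neighbours of $v$. Hence $\deg_T(v) \leq 2$. The only conceptually nontrivial step I anticipate is justifying the equality $i(w_k; w_1) = i(w_k; v)$; once that is in hand, the earlier theorems slot in immediately and the conclusion follows without further computation.
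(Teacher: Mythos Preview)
Your proof is correct and follows essentially the same approach as the paper: both combine the Index Lemma with Theorem~\ref{thm:indexchange_branching} and the identification $i(w_k;w_j)=i(w_k;v)$ to produce a contradiction. The only cosmetic difference is that the paper first singles out the neighbour of maximal index and then applies Theorem~\ref{thm:indexchange_branching} once, whereas you apply the theorem for every neighbour and then take the maximum; these are the same argument in a different order.
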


\begin{proof}
Let $v\in T$ be the vertex that attains the maximal local mean. Assume
for the purpose of contradiction that $v$ has at least three neighbours
and let $a,b$ and $c$ be three of them.

Without loss of generality, assume that $a$ has the highest index
with respect to $v,$ i.e.
\[
i\left(a;v\right)=\max\left\{ i\left(a;v\right),i\left(b;v\right),i\left(c;v\right)\right\} .
\]
As $i\left(b;v\right)$ is defined with respect to the subtree that
is rooted at $b$ and grows away from $v,$ it is equivalent to write
it as $i\left(b;a\right).$ Hence the above implies
\[
i\left(a;v\right)\geq i\left(b;a\right)\textrm{ and }i\left(a;v\right)\geq i\left(c;a\right).
\]
The strategy is to show that including $a$ and then deleting $v$
yields a higher local mean. By the Index Lemma, it is equivalent to
show that
\[
i\left(a;v\right)>i\left(v;a\right).
\]
But Theorem \ref{thm:branchingpath} gives
\[
i\left(b;a\right)>i\left(v;a\right)\textrm{ and }i\left(c;a\right)>i\left(v;a\right),
\]
which completes the proof.
\end{proof}

\section{On extremal local means: a revisit}

Going back to the local means, the analysis on indices provides a
simple proof of Theorem \ref{thm:mainthm}. Suppose $S$ has two leaves
$v$ and $w$ with $\deg_{T}>2.$ Without loss of generality, let
$i\left(v\right)\geq i\left(w\right).$ Then by Theorem \ref{thm:indexchange_branching},
any outer neighbour of $v$ has a higher index, thus also higher than
that of $w,$ which further implies that replacing $w$ by any outer
neighbour of $v$ yields a subtree of the same order with higher local
mean. This process only terminates when $S$ has no more than one
leaf with $\deg_{T}>2.$

The parallel statement of Theorem \ref{thm:mainthm} for the minimizing
case can also be proved by the above method. In general, a subtree
that minimizes the local mean tends to have its leaves located at
vertices with $\deg_{T}>2,$ contrary to the maximizing case. However,
it may depend on the order of subtrees $k,$ order of $T$ and the
structure of $T.$ When $k$ is big enough, all $k$-subtrees, including
the minimizing one(s), may have to contain more than one vertex with
$\deg_{T}\leq2.$ For example, in the case of a star with $n$ leaves
and $k>2,$ any $k$-subtree has $k-1$ leaves.
\begin{thm}
\label{thm:minimal_case}Let $T^{*}\neq\emptyset$ be the core of
$T.$ Let $S\subseteq T$ with $|S|=k,$ and $k\leq|T^{*}|.$ If $S$
is $k$-minimal, i.e. $\mu_{T}\left(S\right)$ attains the minimum
of the local mean at order $k,$ then $S$ has no more than one leaf
with $\deg_{T}\leq2$ and at least one leaf with $\deg_{T}\ge3.$
Specifically, for $k=1,$ the vertex that attains minimal local mean
has $\deg_{T}\geq3.$
\end{thm}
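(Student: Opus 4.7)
The plan is to mirror the swap argument from the beginning of Section~5 (the revisit of Theorem~\ref{thm:mainthm}), with all comparisons reversed. Suppose for contradiction that $S$ is $k$-minimal yet has two distinct leaves $u_{1},u_{2}$ with $\deg_{T}(u_{i})\le 2$ (the case $|S|=1$ is handled at the end). The aim is to exhibit a swap $S\mapsto S+w-u$ of the same order that strictly decreases the local mean via the Index Lemma, yielding a contradiction.

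The hypothesis $k\le|T^{*}|$ enters through the following observation: every leaf of $T^{*}$ has $\deg_{T}\ge 3$. Indeed, a vertex $v\in T^{*}$ with $\deg_{T}(v)\le 2$ cannot be a leaf of $T$ (leaves of $T$ are limb vertices and excluded from $T^{*}$); hence $\deg_{T}(v)=2$, so $v$ lies on a core-path, both of its neighbours lie in $T^{*}$, and $\deg_{T^{*}}(v)=2$, making $v$ non-leaf in $T^{*}$. Consequently the presence of two leaves of $S$ with $\deg_{T}\le 2$ forces $S\ne T^{*}$, and combined with $|S|\le|T^{*}|$ this yields $S\not\supseteq T^{*}$. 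Therefore some component of $T-S$ is not a path and some neighbour $w$ of $S$ satisfies $i(w;S)<\tfrac{1}{2}$. Since each external neighbour of $S$ in a tree is adjacent to a unique vertex of $S$, at least one of $u_{1},u_{2}$, call it $u$, is not adjacent to $w$; then $S+w-u$ is a valid $k$-subtree with
\[
\mu_{T}(S+w-u)=\mu_{T}(S)+i(w;S)-i(u;S)
\]
by the Index Lemma. When $\deg_{T}(u)=1$, or when $\deg_{T}(u)=2$ with the outer direction being a path, one has $i(u;S)=\tfrac{1}{2}>i(w;S)$ and the swap strictly decreases the local mean, contradicting $k$-minimality.

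The main technical obstacle is the subcase in which both $u_{i}$ have $\deg_{T}=2$ with a non-path outer direction, so $i(u_{i};S)<\tfrac{1}{2}$. Here I would refine the choice of $w$ by minimising $i(w;S)$ over all neighbours of $S$ and analyse index propagation along the non-path component of $T-S$ containing $w$: Theorem~\ref{thm: indexchanging_path} gives non-increase along pure-path stretches, while Theorem~\ref{thm:indexchange_branching} gives a strict increase at any branching vertex. Combining these with the expansions~(\ref{eq:2}) and~(\ref{eq:3}) of the indices at the $u_{i}$ should force $i(w;S)<i(u_{i};S)$ for at least one $i$. For $|S|=1$ with $\deg_{T}(v)\le 2$: if $\deg_{T}(v)=1$, then $i(v;v_{1})=\tfrac{1}{2}$ for the unique neighbour $v_{1}$, while $i(v_{1};v)<\tfrac{1}{2}$ because the rooted tree at $v_{1}$ in $T-v$ is a path from $v_{1}$ only if $T$ itself is a path (which is excluded by $T^{*}\ne\emptyset$); hence $\mu_{T}(v_{1})<\mu_{T}(v)$ by the Index Lemma, contradicting minimality. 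If $\deg_{T}(v)=2$, the two components of $T-v$ cannot both be rooted paths from their roots (else $T$ itself is a path), and a parallel index-based analysis on the non-path side, traced inward to a branching vertex, yields a neighbour with strictly smaller local mean.
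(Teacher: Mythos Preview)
Your swap strategy via the Index Lemma is exactly right, but the proposal has a genuine gap precisely where you flag ``the main technical obstacle'': the subcase in which both candidate leaves satisfy $i(u_{i};S)<\tfrac12$. You write that a refined choice of $w$ together with index propagation ``should force'' $i(w;S)<i(u_{i};S)$, but you never carry this out, and the appeal to expansions~(\ref{eq:2}) and~(\ref{eq:3}) is not what is needed. The same applies to your $k=1$, $\deg_{T}(v)=2$ case, which is left as a vague ``parallel index-based analysis \ldots traced inward to a branching vertex''.

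The missing idea is much simpler than a global minimisation over neighbours of $S$. Since each $u_{i}$ has $\deg_{T}=2$, it has a \emph{unique} outer neighbour $u_{i}'$, and this $u_{i}'$ is itself a neighbour of $S$. Theorem~\ref{thm: indexchanging_path} then gives $i(u_{i}';S)<i(u_{i};S)$ directly (strict because $i(u_{i};S)<\tfrac12$). So if, say, $i(u_{1};S)\le i(u_{2};S)$, one simply includes $u_{1}'$ and deletes $u_{2}$: the resulting $k$-subtree has local mean $\mu(S)+i(u_{1}';S)-i(u_{2};S)<\mu(S)$. No global search, no propagation along components, no use of Theorem~\ref{thm:indexchange_branching}. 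The paper organises this by first proving $S\subseteq T^{*}$ (your limb-vertex case, handled once and for all), after which every leaf of $S$ with $\deg_{T}\le 2$ automatically has $\deg_{T}=2$ and index $<\tfrac12$, so only this one clean swap remains. For $k=1$ the paper uses the same idea: with $a\in T^{*}$, $\deg_{T}(a)=2$, neighbours $v,w$ with $i(v;a)\ge i(w;a)$, pass to $w$ and apply Theorem~\ref{thm: indexchanging_path} to compare $i(a;w)$ with $i(v;w)=i(v;a)$; this yields $\mu(w)<\mu(a)$ in two lines. Your $\deg_{T}(v)=1$ argument is fine, but note that once $S\subseteq T^{*}$ is established this case cannot occur.
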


\begin{proof}
Let $S$ be $k$-minimal in $T.$ First, we show that $S\subseteq T^{*}.$
Assume that $S$ contains a leaf $v$ that lies in some limb. Then
$i(v;S)=\frac{1}{2}.$ Note that $k\leq|T^{*}|$ implies that the
rest of $S$ cannot contain the entire core, which further implies,
by the non-emptiness of $T^{*},$ that there exists a neighbour $w$
of $S,$ such that the component of $w$ in $T-S$ contains at least
one vertex with degree $>2.$ Therefore, this component is not a path,
which means $i(w;S)<\frac{1}{2}.$ Thus one can include the neighbour
$w$ and delete $v,$ resulting in a subtree of the same order but
lower local mean, which leads to a contradiction. So, $S\subseteq T^{*}.$ 

Now consider $k\geq2.$ Assume that $S$ has two leaves $v$ and $w$
with $\deg_{T}=2,$ and without loss of generality, $i\left(v\right)\leq i\left(w\right)<\frac{1}{2}.$
By Theorem \ref{thm: indexchanging_path}, we know that the only outer
neighbour of $v,$ call it $v',$ has a lower index than $v,$ hence
lower than the index of $w.$ Including $v'$ and deleting $w$ results
in a subtree of order $k$ but with local mean lower than $S,$ and
the desired contradiction follows. As $k\geq2,$ the previous argument
immediately implies that $S$ has at least one leaf with $\deg_{T}>2$
since $S$ has at least two leaves. 

For $k=1,$ the process is similar. Let $S=\{a\}$ and denote the
local mean at $S$ by $\mu.$ Assume $\deg_{T}\left(a\right)=2.$
Let $v,w$ be its two \foreignlanguage{british}{neighbours}. With
out loss of generality, let $i\left(v;a\right)\ge i\left(w;a\right).$
Observe that both indices are strictly less than $\frac{1}{2}$ since
$a\in T^{*}.$ First, we include $w$ and obtain a 2-subtree $\{a,w\}$
with local mean $\mu+i(w;a).$ Then we delete $a$ and obtain a singleton
subtree $\{w\}$ with local mean $\mu+i(w;a)-i(a;w).$ However, $i(a;w)>i(v;w)=i(v;a)$
by Theorem \ref{thm: indexchanging_path}. Hence $w$ has a lower
local mean than $a,$ which contradicts that $S=\{a\}$ is $k$-minimal.
\end{proof}
Note that since there is no core in a path, the above result does
not apply to paths.

\begin{example*}
Let us consider an example of two $\left(n+1\right)$-stars $K_{1,n}$
with their \foreignlanguage{british}{centres} connected by a path
of $k-2$ vertices, a more general setting than the example in Section
3 (Figure~\ref{fig:bothcase}). Let $n>1$ and $k>2.$ We will show
that a subtree minimizing the local mean at order 2 has one vertex
at one of the \foreignlanguage{british}{centres} of the stars and
the other vertex in the middle path, which agrees with the prediction
of the previous results.
\end{example*}
Let vertices be labelled as in Figure~\ref{fig:twostars}.

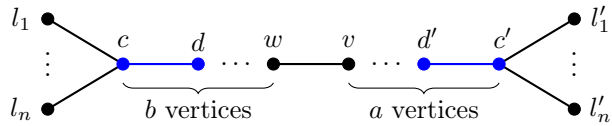
\begin{figure}[h]
\captionsetup{justification=centering}    
\begin{center}     
\begin{tikzpicture}     
	{          
	\draw[thick] (0,0)--(1,0);
	\node at (1.5,0) {$\ldots$};
	\draw[thick] (4,0.6)--(3,0) -- (4,-0.6); 
	\draw[fill,blue] (2,0) circle (0.08);
	\draw[thick,blue] (2,0)--(3,0); 
	
	\draw[fill] (0,0) circle (0.08);     
	\draw[fill] (1,0) circle (0.08);
	\draw[fill,blue] (3,0) circle (0.08);     
	\draw[fill] (4,0.6) circle (0.08);
	\node at (4,0.1) {$\vdots$};
	\draw[fill] (4,-0.6) circle (0.08); 

	\node at (-0.5,0) {$\ldots$};
	\draw[thick] (-3,0.6)--(-2,0) -- (-3,-0.6); 
	\draw[fill,blue] (-1,0) circle (0.08);

	\draw[fill,blue] (-2,0) circle (0.08);     
	\draw[fill] (-3,0.6) circle (0.08);
	\node at (-3,0.1) {$\vdots$};
	\draw[fill] (-3,-0.6) circle (0.08);
	\draw[thick,blue] (-1,0)--(-2,0);
	
    \node at (-3.35,0.6) {$l_1$};
	\node at (-3.35,-0.6) {$l_n$};
	\node at (-2,0.3) {$c$};
	\node at (-1,0.3) {$d$};
	\draw [decorate,decoration={brace,amplitude=4pt},xshift=0pt,yshift=0pt] (3,-0.25) -- (1,-0.25) node [black,midway,yshift=-0.35cm]{$a$ vertices};
	
	\node at (4.35,0.6) {$l'_1$};
	\node at (4.35,-0.6) {$l'_n$};
	\node at (3.05,0.35) {$c'$};
	\node at (2.05,0.35) {$d'$};
	\draw [decorate,decoration={brace,amplitude=4pt},xshift=0pt,yshift=0pt] (0,-0.25) -- (-2,-0.25) node [black,midway,yshift=-0.35cm]{$b$ vertices};

	\node at (1,0.3) {$v$};     
	\node at (0,0.3) {$w$};
	   } 	
\end{tikzpicture}\\ 
\end{center} 
\caption{Two 2-minimal subtrees (marked in blue).}
\label{fig:twostars} 
\end{figure} 

There are two types of 2-subtrees to calculate. The first type consists
of one centre and one leaf, without loss of generality, say $S_{1}=\left\{ l_{1},c\right\} .$
The second type is an edge in the path, $S_{2}=\left\{ w,v\right\} .$
But according to the Index Lemma $\left\{ l_{1},c\right\} $ has a
higher local mean than $\left\{ c,d\right\} ,$ since $i\left(l_{1};c\right)=\frac{1}{2}>i\left(d;c\right).$
So we only need to calculate the second type. Denote the local mean
at $S_{2}$ by $\mu_{2}.$ Then $\mu_{2}=\mu_{v}+\mu_{w},$ where
$\mu_{v}$ is the local mean at $v$ of the component in $T-wv$ that
contains $v,$ and similarly for $\mu_{w}.$ We have
\[
\mu_{w}=\frac{2^{n}+n2^{n-1}+2^{n}\left(b-1\right)+\frac{b\left(b-1\right)}{2}}{2^{n}+b-1}=\frac{b}{2}+\frac{\left(n+b\right)2^{n-1}}{2^{n}+b-1},
\]
and analogously
\[
\mu_{v}=\frac{a}{2}+\frac{\left(n+a\right)2^{n-1}}{2^{n}+a-1}.
\]
Thus
\[
\mu_{2}=\frac{k}{2}+\frac{\left(n+a\right)2^{n-1}}{2^{n}+a-1}+\frac{\left(n+b\right)2^{n-1}}{2^{n}+b-1},
\]
where $1\leq a,b\leq k-1$ and $a+b=k.$ We claim that $\mu_{2}$
attains its minimum

\[
\left(\mu_{2}\right)_{min}=\frac{k}{2}+\frac{n+1}{2}+\frac{\left(n+k-1\right)2^{n-1}}{2^{n}+k-2},
\]
at either end $a=1,b=k-1$ or $a=k-1,b=1.$ Then the minimum of the
local mean at order 2 is attained by subtree $\{c,d\}$ or $\{c',d'\},$
which agrees with the theory.
\begin{proof}[Proof of the claim]
 To see that $\mu_{2}$ attains its minimum at either end, i.e. $a=1,b=k-1$
or $a=k-1,b=1,$ we consider the following function,
\begin{align*}
f\left(x\right) & =\frac{\left(n+x\right)2^{n-1}}{2^{n}+x-1}+\frac{\left(n+k-x\right)2^{n-1}}{2^{n}+k-x-1}\\
 & =2^{n}-\frac{2^{2n-1}-\left(n+1\right)2^{n-1}}{x+2^{n}-1}+\frac{2^{2n-1}-\left(n+1\right)2^{n-1}}{x-2^{n}-k+1},
\end{align*}
where $1\leq x\leq k-1.$ Taking the derivative, one has
\[
f'\left(x\right)=A_{n}\left(\frac{1}{\left(2^{n}+x-1\right)^{2}}-\frac{1}{\left(2^{n}+k-1-x\right)^{2}}\right),
\]
where $A_{n}=2^{2n-1}-\left(n+1\right)2^{n-1}>0$ for all $n>1.$
It immediately follows that
\[
\begin{cases}
f'\left(x\right)>0, & 1\leq x<\frac{k}{2},\\
f'\left(x\right)=0, & x=\frac{k}{2},\\
f'\left(x\right)<0, & \frac{k}{2}<x\leq k-1.
\end{cases}
\]
So for $1\leq x\leq k-1,$ $f(x)$ attains its minimum at $x=1$ and
$x=k-1,$ which completes the proof. Also observe that $f\left(x\right)$
attains its maximum in the middle of the range, which justifies the
claim in the previous example (Figure~\ref{fig:bothcase}).
\end{proof}

\section{On density and its localization}

For the sake of comparing local means of subtrees of different orders,
we normalize the local mean and define the local density of $T$ which
generalizes the (global) density in \cite{MR735190}. 
\begin{defn*}
Let $S\subset T$ be a proper subtree where $|T|=n$ and $|S|=k.$
Let $\mu_{T}\left(S\right)$ be the local mean at $S.$ Define the
$\mathit{local}$ $\mathit{density}$ $\mathit{at}$ $\mathit{S}$
as
\[
D_{T}\left(S\right)=\frac{\mu_{T}\left(S\right)-k}{n-k}.
\]
\end{defn*}
Just as density can be interpreted as the probability of a randomly
picked vertex being in a randomly picked subtree, local density at
$S$ can be interpreted as the probability of a random vertex $\mathit{outside}$
of $S$ being in a random subtree $\mathit{containing}$ $S.$
\begin{rem*}
In the case of $S=\emptyset,$ namely $k=0,$ the local density reduces
to the global density. At the other end, when $k=n-1,$ there are
only two subtrees containing $S,$ therefore $\mu\left(S\right)=k+\frac{1}{2}$
and $D\left(S\right)=\frac{1}{2}.$ 
\end{rem*}
\begin{thm}
\label{thm:DSvsDS-w}With the notations above, let $2\leq k\leq n-1.$
Let $w$ be a leaf of $S$ and $v$ be a neighbour of $S.$ Then
\[
D\left(S\right)\geq D\left(S-w\right)\Leftrightarrow D\left(S\right)\geq1-i\left(w;S\right),
\]
where one equality holds if and only if the other equality holds.
Similarly,
\[
D\left(S+v\right)\geq D\left(S\right)\Leftrightarrow D\left(S\right)\geq1-i\left(v;S\right).
\]
\end{thm}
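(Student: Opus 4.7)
Both equivalences will follow from direct algebraic manipulation, using the Index Lemma to relate $\mu_T(S-w)$ and $\mu_T(S+v)$ to $\mu_T(S)$ and then clearing denominators in the definition of local density. To lighten notation I will write $m := \mu_T(S) - k$, so that $D_T(S) = m/(n-k)$; note that $m \geq 0$ because $\mu_T(S) \geq (|S|+|T|)/2 \geq k$ by the classical lower bound on local mean.

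For the first equivalence, the Index Lemma gives $\mu_T(S-w) = \mu_T(S) - i(w;S)$, hence
\[
D_T(S-w) = \frac{\mu_T(S) - i(w;S) - (k-1)}{n - (k-1)} = \frac{m + 1 - i(w;S)}{n-k+1}.
\]
Since both denominators are positive under the hypothesis $2 \leq k \leq n-1$, the inequality $D_T(S) \geq D_T(S-w)$ is equivalent to $m(n-k+1) \geq (m + 1 - i(w;S))(n-k)$. Expanding and cancelling the common $m(n-k)$ term leaves $m \geq (n-k)(1 - i(w;S))$, which upon dividing by $n-k$ is exactly $D_T(S) \geq 1 - i(w;S)$. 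Every step in the chain is reversible, so equality in the left-hand inequality corresponds to equality in the right-hand inequality.

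The second equivalence follows by the same template. The Index Lemma yields $\mu_T(S+v) = \mu_T(S) + i(v;S)$, and assuming $k \leq n-2$ so that $S+v$ is still a proper subtree, $D_T(S+v) = (m + i(v;S) - 1)/(n-k-1)$. Cross-multiplying the inequality $D_T(S+v) \geq D_T(S)$ and cancelling the common $m(n-k)$ term again produces $m \geq (n-k)(1 - i(v;S))$, i.e.\ $D_T(S) \geq 1 - i(v;S)$. The only care points are pure bookkeeping: one must verify positivity of the denominators so that cross-multiplication is sign-preserving, and flag the boundary case $k = n-1$ of the second assertion, in which $S+v = T$ and $D_T(S+v)$ is no longer defined. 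There is no genuine conceptual obstacle; the proof is a one-line application of the Index Lemma followed by a cross-multiplication.
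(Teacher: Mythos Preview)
Your proof is correct and follows essentially the same approach as the paper: apply the Index Lemma to express $\mu_T(S-w)$ (respectively $\mu_T(S+v)$) in terms of $\mu_T(S)$, substitute into the definition of local density, and reduce the comparison to $D_T(S) \gtrless 1 - i(\cdot;S)$ by clearing denominators. The paper arranges the algebra slightly differently (rewriting each side as a ratio before cancelling) whereas you cross-multiply directly, and you are more explicit about positivity of denominators and the degenerate case $k=n-1$ in the second statement, but there is no substantive difference.
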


\begin{proof}
By the Index Lemma, we have the following equivalent inequalities:
\begin{align*}
D\left(S\right) & \geq D\left(S-w\right)\\
\frac{\mu\left(S\right)-k}{n-k} & \geq\frac{\mu\left(S\right)-i\left(w\right)-\left(k-1\right)}{n-\left(k-1\right)}\\
\frac{n-k+1}{n-k} & \geq\frac{\mu\left(S\right)-k-i\left(w\right)+1}{\mu\left(S\right)-k}\\
\frac{1}{n-k} & \geq\frac{1-i\left(w\right)}{\mu\left(S\right)-k}\\
D\left(S\right) & \geq1-i\left(w\right).
\end{align*}
The same calculation yields the other equivalence.
\end{proof}
We know that the local mean trivially increases if the subtree absorbs
a neighbour, but for local density, this is no longer the case. Local
density can increase after deleting a leaf or decrease after including
a neighbour. An advantage, however, is that local density allows one
to compare subtrees of $\mathit{different}$ orders. Hence one can
say ``the subtree that attains maximal local density in $T$'' without
specifying the order of subtree unlike the case of local mean.
\begin{example*}
Consider $T$ to be the tree obtained by connecting the centres of
two 2-stars $K_{1,2}$ by an edge. See Figure~\ref{fig:two2stars}.

\begin{figure}[h]     
\begin{center}     
\begin{tikzpicture}     
	{          
	\draw[thick] (0,0)--(1,0);
	
	\draw[thick] (2,0.4)--(1,0) -- (2,-0.4);    
	
	\draw[fill] (0,0) circle (0.08);     
	\draw[fill] (1,0) circle (0.08);
	     
	\draw[fill] (2,0.4) circle (0.08);
	\draw[fill] (2,-0.4) circle (0.08); 

	\draw[thick] (-1,0.4)--(0,0) -- (-1,-0.4); 
     
	\draw[fill] (-1,0.4) circle (0.08);

	\draw[fill] (-1,-0.4) circle (0.08);
	
    \node at (-1.35,0.4) {$v$};
	\node at (0,0.3) {$c$};
	\node at (1,0.3) {$d$};

	   } 	
\end{tikzpicture}\\ 
\end{center} 
\caption{Two 2-stars.}
\label{fig:two2stars} 
\end{figure}
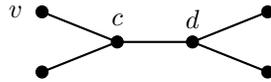 

Then $T^{*}=\left\{ c,d\right\} $ and we have
\[
D\left(v,c\right)=\frac{21}{40},D\left(c\right)=\frac{13}{25},D\left(v\right)=\frac{31}{55}.
\]
Thus
\[
D\left(c\right)<D\left(v,c\right)<D\left(v\right).
\]
As this example shows, local density can change in both directions
by removing a vertex, in other words, local density does not respect
set-inclusion in general. Furthermore, there are in fact four subtrees
that attain maximal local density, namely the four singleton leaves.
The local densities of the remaining subtrees (those not isomorphic
to $\{v,c\},\{c\}$ or $\{v\}$) are indeed all equal to $\frac{1}{2},$
which is justified by the next proposition.
\end{example*}
\begin{prop}
\label{prop:bodydensity1/2}Let $T^{*}$ be the core of $T$ and $S\subset T$
a non-empty subtree with $T^{*}\subseteq S.$ Then
\[
D\left(S\right)=\frac{1}{2}.
\]
\end{prop}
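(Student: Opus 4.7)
The plan is to exploit the structural fact that when $T^{*}\subseteq S\subsetneq T$, every component of $T-S$ is a path. Indeed, $T-T^{*}$ is the disjoint union of the limbs of $T$, each of which is by definition a path whose vertices all have degree $1$ or $2$ in $T$ and which is attached to the rest of $T$ only at one endpoint. Since $T-S\subseteq T-T^{*}$, each connected component of $T-S$ lies inside a single limb and is therefore itself a path. This also verifies the ``equivalently'' clause in the statement of Theorem \ref{thm:localdensitylowerbound}.

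Given this structure, I would then enumerate the subtrees containing $S$ directly. Let the components of $T-S$ be paths $P_{1},\ldots,P_{r}$ with $\ell_{i}:=|P_{i}|$, so $\sum_{i}\ell_{i}=n-k$. A subtree of $T$ containing $S$ is obtained by independently selecting, from each $P_{i}$, a prefix of length $j_{i}\in\{0,1,\ldots,\ell_{i}\}$ starting at the neighbour of $S$. Hence $N_{T}(S)=\prod_{i}(\ell_{i}+1)$, and the total order splits as
\[
R_{T}(S)=k\,N_{T}(S)+\sum_{i}\Bigl(\sum_{j=0}^{\ell_{i}}j\Bigr)\prod_{t\neq i}(\ell_{t}+1).
\]
Using $\sum_{j=0}^{\ell}j=\ell(\ell+1)/2$ and dividing by $N_{T}(S)$ collapses everything to $\mu_{T}(S)=k+\tfrac{1}{2}\sum_{i}\ell_{i}=(n+k)/2$, and plugging into the definition of $D_{T}(S)$ gives exactly $1/2$.

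There is no serious obstacle here; the only content beyond the structural observation is that the average length of a prefix of a length-$\ell$ path is $\ell/2$. A slick alternative would be a reverse induction on $|S|$: the base case $|T-S|=1$ yields $\mu_{T}(S)=k+\tfrac{1}{2}$ directly, and in the inductive step any neighbour $v$ of $S$ satisfies $i(v;S)=\tfrac{1}{2}$ by the structural claim above, so the Index Lemma together with Theorem \ref{thm:DSvsDS-w} propagates $D_{T}(S+v)=\tfrac{1}{2}$ back down to $D_{T}(S)=\tfrac{1}{2}$.
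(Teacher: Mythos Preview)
Your proof is correct. Your primary argument is a direct enumeration: once you observe that each component of $T-S$ is a path attached to $S$ at a single endpoint, the subtrees containing $S$ factor over these paths and $\mu_{T}(S)=k+\sum_{i}\ell_{i}/2=(n+k)/2$ follows from the elementary identity $\sum_{j=0}^{\ell}j=\ell(\ell+1)/2$. This is self-contained and does not rely on any of the paper's machinery.

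The paper instead takes the inductive route you sketch as an alternative: it starts at $|S|=n-1$, where $D(S)=\tfrac12$ is immediate, and peels off limb leaves of $S$ one at a time, using $i(v;S)=\tfrac12$ together with Theorem~\ref{thm:DSvsDS-w} to preserve the value $\tfrac12$ at each step. So your ``slick alternative'' is essentially the paper's proof (run in the opposite direction, adding neighbours rather than removing leaves), while your main argument bypasses the Index Lemma entirely. The direct computation is arguably cleaner here, though the inductive version has the advantage of illustrating the equality case of Theorem~\ref{thm:DSvsDS-w} in action.
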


\begin{proof}
From the last remark we know that for any $\left(n-1\right)$-subtree
the local density is $\frac{1}{2}.$ If there is no limb vertex in
it, the statement is true automatically. Otherwise, one can take away
one limb vertex that is also a leaf of $S$, say $v,$ and
\[
D\left(S\right)=\frac{1}{2}=1-i\left(v;S\right),
\]
where $i\left(v;S\right)=\frac{1}{2}$ since $S$ contains the body
of $T$ and $v$ is a limb vertex. Then Theorem \ref{thm:DSvsDS-w}
implies $D\left(S-v\right)=D\left(S\right).$ Repeating the above
process until all limbs are removed yields the statement.
\end{proof}
Combining Proposition \ref{prop:bodydensity1/2} and Theorem \ref{thm:DSvsDS-w},
one immediately has the following corollary.
\begin{cor}
\label{cor:bodyminusonenode}Let $T^{*}$ be the core of $T.$ Let
$v$ be a leaf of $T^{*}\neq\left\{ v\right\} .$ Then
\[
D\left(T^{*}-v\right)>\frac{1}{2}.
\]
\end{cor}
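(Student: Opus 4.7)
The plan is to combine Proposition~\ref{prop:bodydensity1/2} with Theorem~\ref{thm:DSvsDS-w} to reduce the desired strict inequality to a strict bound on the index $i(v; T^{*})$. Proposition~\ref{prop:bodydensity1/2} gives $D_{T}(T^{*}) = 1/2$ (applied to $S = T^{*}$), and Theorem~\ref{thm:DSvsDS-w} applied with $S = T^{*}$ and the leaf $w = v$ of $T^{*}$ yields the equivalence
\[
D_{T}(T^{*}) \geq D_{T}(T^{*} - v) \iff D_{T}(T^{*}) \geq 1 - i(v; T^{*}),
\]
where equality on one side is equivalent to equality on the other. Substituting $D_{T}(T^{*}) = 1/2$, the corollary's claim $D_{T}(T^{*} - v) > 1/2$ is therefore equivalent to the strict index bound $i(v; T^{*}) < 1/2$.

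To establish this bound, I would let $v'$ denote the unique neighbour of $v$ inside $T^{*}$, which exists because $v$ is a leaf of $T^{*} \neq \{v\}$. The subtree of $T$ rooted at $v$ that grows away from $T^{*}$ coincides with the component of $T - v'$ containing $v$, since $v'$ is the only vertex of $T^{*}$ adjacent to $v$ and $T^{*}$ is connected; hence $i(v; T^{*}) = i(v; v')$. I would then invoke the characterisation recorded right after equation~(\ref{eq:0}): $v$ is a limb vertex if and only if $i(v; w) = 1/2$ for some neighbour $w$ of $v$. Since $v \in T^{*}$, the definition of the core ensures $v$ is not a limb vertex, and consequently $i(v; v') < 1/2$, as desired.

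The only subtle check is that a leaf of $T^{*} \neq \{v\}$ really satisfies $\deg_{T}(v) \geq 3$, so that the limb-vertex dichotomy applies cleanly. This follows directly from the maximality in the definition of a limb: if $\deg_{T}(v) \leq 2$, at least one of $v$'s neighbours lies in a limb (because $v$ has a neighbour outside $T^{*}$), and that limb, being maximal among paths of leaves and degree-$\leq 2$ vertices, would absorb $v$ itself, contradicting $v \in T^{*}$. Beyond this brief verification, I do not anticipate any genuine obstacle: the corollary is essentially a formal consequence of Proposition~\ref{prop:bodydensity1/2}, Theorem~\ref{thm:DSvsDS-w}, and the limb-vertex interpretation of the equality case $i = 1/2$.
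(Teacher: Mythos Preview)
Your proof is correct and follows exactly the route the paper intends: it combines Proposition~\ref{prop:bodydensity1/2} (giving $D_T(T^*)=\tfrac12$) with the equivalence in Theorem~\ref{thm:DSvsDS-w}, reducing the claim to $i(v;T^*)<\tfrac12$, and then derives that strict inequality from the fact that $v\in T^*$ is not a limb vertex. The paper states the corollary as an immediate consequence of these two results without spelling out the index argument; your version simply makes that step explicit.
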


Next, we establish a lower bound on the local density.
\begin{thm}
\label{thm:localdensitylowerbound}Let $S\neq\emptyset$ be a non-empty
subtree of $T.$ Then 
\[
D\left(S\right)\geq\frac{1}{2},
\]
with equality if and only if $S\supseteq T^{*}.$
\end{thm}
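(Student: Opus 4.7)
The plan is to prove the inequality $D(S) \geq \tfrac{1}{2}$ by induction on $n - |S|$, and then to extract the equality condition from the same induction.

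For the base case $n - |S| = 1$, there are exactly two subtrees of $T$ containing $S$ (namely $S$ and $T$), so direct computation yields $\mu(S) = |S| + \tfrac{1}{2}$ and hence $D(S) = \tfrac{1}{2}$. The single vertex in $T \setminus S$ has its unique neighbour in $S$, so it is a leaf of $T$, hence a limb vertex, and $T^{*} \subseteq S$, matching the equality condition.

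For the inductive step, I would fix any neighbour $v$ of $S$ (which exists because $S \subsetneq T$) and apply the inductive hypothesis to obtain $D(S+v) \geq \tfrac{1}{2}$. Theorem~\ref{thm:DSvsDS-w} then furnishes a dichotomy: either $D(S) \geq 1 - i(v;S)$, in which case the universal bound $i(v;S) \leq \tfrac{1}{2}$ from (\ref{eq:0}) gives $D(S) \geq \tfrac{1}{2}$ immediately; or $D(S) < 1 - i(v;S)$, in which case Theorem~\ref{thm:DSvsDS-w} yields $D(S+v) < D(S)$, and chaining with the inductive hypothesis gives $D(S) > D(S+v) \geq \tfrac{1}{2}$. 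Either branch closes the induction.

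For the equality characterisation, the direction $(\Leftarrow)$ is exactly Proposition~\ref{prop:bodydensity1/2}. For $(\Rightarrow)$, I would assume $D(S) = \tfrac{1}{2}$ and re-examine the dichotomy above: the second branch produces the strict inequality $D(S) > \tfrac{1}{2}$, so for every neighbour $v$ of $S$ we must land in the first branch, and the chain $\tfrac{1}{2} = D(S) \geq 1 - i(v;S) \geq \tfrac{1}{2}$ forces $i(v;S) = \tfrac{1}{2}$. Invoking the equality characterisation in (\ref{eq:0}), this forces every component $C$ of $T - S$, rooted at its unique vertex adjacent to $S$, to be a path having that vertex as an endpoint. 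A short degree count then shows every vertex of $C$ has $\deg_{T} \leq 2$, i.e.\ is a limb vertex, which gives $T^{*} \subseteq S$.

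The main obstacle is largely conceptual: recognising that the induction closes using only Theorem~\ref{thm:DSvsDS-w} together with the universal index bound, and then being careful enough in the equality analysis to rule out the ``shrinking'' branch of the dichotomy (where the inequality is strict) so as to extract the pointwise condition $i(v;S) = \tfrac{1}{2}$ at every neighbour $v$. Translating this pointwise condition into the structural statement $T^{*} \subseteq S$ is routine once the shape of each component of $T - S$ has been pinned down.
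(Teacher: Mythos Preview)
Your proof is correct and takes a genuinely different route from the paper's. The paper exploits contraction invariance, observing via (\ref{eq:5}) that $D_T(S) = D_{T/S}(S/S)$, and then applies Jamison's vertex lower bound (\ref{eq:lowerbound}) to the single vertex $S/S$ in $T/S$; the equality condition ``$T/S$ is astral over $S/S$'' translates directly back to $S \supseteq T^{*}$. Your approach instead inducts on $n - |S|$, growing $S$ by one neighbour at a time and using the dichotomy in Theorem~\ref{thm:DSvsDS-w} together with the universal index bound (\ref{eq:0}). The paper's argument is shorter and gets the equality characterisation in one stroke, while yours stays entirely inside the density/index framework developed in this section (it never appeals to (\ref{eq:lowerbound})) and makes explicit that equality forces $i(v;S)=\tfrac{1}{2}$ at \emph{every} neighbour $v$ of $S$, which is a pleasant structural byproduct.

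One small technicality worth flagging: Theorem~\ref{thm:DSvsDS-w} is stated for $2 \leq k \leq n-1$, but your inductive step also needs its second equivalence when $|S|=1$. The restriction $k \geq 2$ in that theorem is only there so that $S-w$ is non-empty in the \emph{first} equivalence; the computation for the second equivalence goes through verbatim for any $1 \leq k \leq n-2$, which is exactly the range you are in during the inductive step. You should remark on this when you invoke it.
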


\begin{proof}
Let $k=\left|S\right|.$ By contraction and its induced bijection
and the equality (\ref{eq:5}), we have
\[
D_{T}\left(S\right)=\frac{\mu_{T}\left(S\right)-k}{n-k}=\frac{\mu_{T/S}\left(S/S\right)-1}{\left(n-k+1\right)-1}=D_{T/S}\left(S/S\right),
\]
i.e., the local density is invariant under contraction. Then (\ref{eq:lowerbound})
implies
\begin{align*}
D_{T/S}\left(S/S\right) & =\frac{\mu_{T/S}\left(S/S\right)-1}{n-\left(k-1\right)-1}\geq\frac{\frac{n-\left(k-1\right)+1}{2}-1}{n-k}=\frac{1}{2},
\end{align*}
with equality if and only if $T/S$ is astral over $S/S,$ which is
equivalent to $S\supseteq T^{*}.$
\end{proof}
Despite the fact that it is possible for the local density to increase
or decrease in removing a vertex, with the above theorem, one can
now say that the local density (at $S$) does not increase in removing
a vertex of index $\frac{1}{2}$ (with respect to $S$).
\begin{prop}
\label{thm:remove1/2node}Let $S$ consist of two adjacent vertices
$v,w$ with $i\left(w;v\right)=\frac{1}{2}.$ Then
\[
D_{T}\left(v\right)\leq D_{T}\left(v,w\right),
\]
where the equality holds if and only if $T$ is astral over $v.$
\end{prop}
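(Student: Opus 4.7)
The plan is to combine Theorem \ref{thm:DSvsDS-w} (applied to $S=\{v,w\}$ with leaf $w$) with the lower bound of Theorem \ref{thm:localdensitylowerbound}. The former states
\[
D_T(v,w)\geq D_T(v) \iff D_T(v,w)\geq 1-i(w;v),
\]
with matching equality cases. Substituting $i(w;v)=\frac{1}{2}$, the right-hand threshold becomes $\frac{1}{2}$, and Theorem \ref{thm:localdensitylowerbound} gives $D_T(v,w)\geq \frac{1}{2}$ unconditionally, with equality iff $\{v,w\}\supseteq T^{*}$. Combining these yields $D_T(v)\leq D_T(v,w)$ in all cases, and reduces the equality condition to $T^{*}\subseteq\{v,w\}$.

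What remains is to identify $T^{*}\subseteq\{v,w\}$ with ``$T$ is astral over $v$''. The hypothesis $i(w;v)=\frac{1}{2}$ makes $w$ a limb vertex of $T$ (by the paper's characterization that a vertex has index $\frac{1}{2}$ with respect to some neighbor iff it is a limb vertex), so $w\notin T^{*}$. Thus $T^{*}\subseteq\{v,w\}$ collapses to $T^{*}\subseteq\{v\}$. Either $T^{*}=\emptyset$, which forces $T$ to be a path (the only tree with no core vertex), or $T^{*}=\{v\}$, which forces $v$ to be the unique vertex of $T$ with degree greater than $2$. Both cases are precisely ``$T$ is astral over $v$'', and the converse follows from the same dichotomy.

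The main obstacle is the last equivalence between the set inclusion and the astral condition. Without the hypothesis $i(w;v)=\frac{1}{2}$ one could have two adjacent core vertices $v,w$ both of degree $\geq 3$ with $T^{*}=\{v,w\}$, so the set inclusion would hold but $T$ would not be astral over $v$. The hypothesis rules this out by capping $\deg_T(w)\leq 2$ and thus excluding $w$ from the core.
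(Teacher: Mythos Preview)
Your proof is correct and follows exactly the route the paper takes: the paper's own proof is a single sentence citing Theorem~\ref{thm:DSvsDS-w} and Theorem~\ref{thm:localdensitylowerbound}, and you have simply unpacked that citation, including a careful treatment of the equality case (showing $T^{*}\subseteq\{v,w\}$ collapses to $T^{*}\subseteq\{v\}$ because $i(w;v)=\tfrac{1}{2}$ forces $w$ to be a limb vertex) that the paper leaves to the reader.
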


\begin{proof}
This follows directly from the first half of Theorem \ref{thm:DSvsDS-w}
and Theorem \ref{thm:localdensitylowerbound}.
\end{proof}
This proposition can easily be extended to a more general setting
as follows. 
\begin{thm}
\label{thm:limbsincreaselocald}Let $S\varsubsetneq T$ and $S^{*}=S\cap T^{*}.$
If $S^{*}\neq\emptyset,$ then
\[
\ensuremath{D\left(S^{*}\right)\leq D\left(S\right),}
\]
where the equality holds if and only if $S\subseteq T^{*}$ or $T^{*}\subseteq S.$
\end{thm}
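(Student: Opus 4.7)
The plan is to reduce to Theorem \ref{thm:DSvsDS-w} and Theorem \ref{thm:localdensitylowerbound} by peeling $S$ one leaf at a time from its limb tips inward. If $S\subseteq T^{*}$, then $S=S^{*}$ and the inequality is a trivial equality, so I assume $S\not\subseteq T^{*}$. Because $S$ is a subtree containing the non-empty $S^{*}$, each connected component of $S\setminus S^{*}$ is a path that extends from some vertex of $S^{*}$ along a limb of $T$. The far endpoint $w$ of any such component is a leaf of $S$, is a limb vertex of $T$, and its unique $S$-neighbour lies between $w$ and $S^{*}$; therefore the rooted tree at $w$ growing away from $S$ is the remainder of that limb, which is a path, and by the equality clause of (\ref{eq:0}) we have $i(w;S)=\tfrac{1}{2}$.

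Applying Theorem \ref{thm:DSvsDS-w} to this $w$ yields
\[
D(S)\geq D(S-w)\iff D(S)\geq 1-i(w;S)=\tfrac{1}{2},
\]
and the right-hand inequality is supplied by Theorem \ref{thm:localdensitylowerbound}. The subtree $S-w$ still contains $S^{*}$ and is still a proper non-empty subtree of $T$, so this step can be iterated, producing a chain $S=S_{0}\supsetneq S_{1}\supsetneq\cdots\supsetneq S_{m}=S^{*}$ obtained by successive removal of limb-vertex leaves with $D(S_{i})\geq D(S_{i+1})$ at each stage. Concatenation yields $D(S^{*})\leq D(S)$.

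For the equality clause, Theorem \ref{thm:DSvsDS-w} further shows that step $i$ is tight precisely when $D(S_{i})=\tfrac{1}{2}$, which by Theorem \ref{thm:localdensitylowerbound} is equivalent to $S_{i}\supseteq T^{*}$. If $T^{*}\subseteq S$, then this condition is preserved under each removal (every deleted vertex is a limb vertex, hence outside $T^{*}$), so the whole chain is flat. Conversely, if $T^{*}\not\subseteq S$, then $S^{*}\subsetneq T^{*}$ and hence $D(S^{*})>\tfrac{1}{2}$; at the last step $S_{m-1}\to S^{*}$ we similarly have $S_{m-1}\not\supseteq T^{*}$ (otherwise, since $w_{m-1}$ is a limb vertex, $S^{*}=S_{m-1}-w_{m-1}$ would still contain $T^{*}$), so $D(S_{m-1})>\tfrac{1}{2}$ and that step is strict, giving $D(S)>D(S^{*})$.

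I expect the main subtlety to be the geometric observation used in the first paragraph: one must argue that the limb-vertex leaf $w$ of $S_{i}$ has its $S_{i}$-neighbour pointing toward $S^{*}$ rather than toward the tip of the limb, so that the subtree grown away from $S_{i}$ is genuinely a path with index exactly $\tfrac{1}{2}$. This uses connectedness of $S_{i}$ together with $S^{*}\neq\emptyset$. Once this is in hand, the rest of the argument is a direct iteration of Theorem \ref{thm:DSvsDS-w} and Theorem \ref{thm:localdensitylowerbound}.
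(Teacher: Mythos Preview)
Your proof is correct and follows essentially the same route as the paper: peel limb-vertex leaves of $S$ one at a time, invoking Theorem~\ref{thm:localdensitylowerbound} to get $D(S_i)\geq\tfrac{1}{2}=1-i(w;S_i)$ and then Theorem~\ref{thm:DSvsDS-w} to conclude $D(S_i)\geq D(S_{i+1})$, with the equality clause tracked via the characterization $D(S_i)=\tfrac{1}{2}\Leftrightarrow T^{*}\subseteq S_i$. Your treatment is in fact more explicit than the paper's on both the geometric point that the limb-leaf of $S_i$ faces outward along its limb (so that $i(w;S_i)=\tfrac{1}{2}$) and the equality analysis, but the underlying argument is the same.
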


\begin{proof}
Let $L=S-S^{*}$ be the part of $S$ consisting of limbs of $T.$
If $L=\emptyset,$ then $S=S^{*},$ i.e., $S\subseteq T^{*},$ and
the conclusion follows trivially. Now let $v\in L$ be a leaf of $S$
in some limb. By Theorem \ref{thm:localdensitylowerbound}, $D(S)\ge\frac{1}{2}$
with equality if and only if $T^{*}\subseteq S.$ On the other hand,
$i(v;S)=\frac{1}{2}.$ Therefore, $D(S)\ge1-i(v;S).$ Then Theorem
\ref{thm:DSvsDS-w} yields $D(S-v)\le D(S),$ with equality if and
only if $T^{*}\subseteq S.$ One can repeat this process of deleting
limb leaves of $S$ and eventually reach the conclusion.
\end{proof}
From the results so far, we know that the lower bound $D\left(S\right)=\frac{1}{2}$
is attained if and only if $S$ contains the core of $T.$ Any other
subtree has $D\left(S\right)>\frac{1}{2}.$ 

On the other hand, local density can be arbitrarily close to 1. Let
$T$ be a tree of order $n.$ Indeed, we know that local mean at a
vertex is no less than global mean \cite[Theorem 3.9]{MR735190},
and that there exist trees with global mean arbitrarily close to $n.$
Combining the two facts, there exist trees whose local densities are
arbitrarily close to 1,
\[
1\geq D_{T}\left(v\right)=\frac{\mu_{T}\left(v\right)-1}{n-1}\geq\frac{\mu_{T}-1}{n-1}\rightarrow1,\;n\rightarrow\infty.
\]

However, the story of local density on the maximal side is still far
from complete. We have not yet found a characterization for subtrees
that attain maximal local density as we have for the minimal case.
What is known is that if $S$ attains the maximal local density in
$T$ then $S$ necessarily attains the maximal local mean at order
$\left|S\right|.$ This observation follows immediately from the definition
of local density. Hence the properties described in Section 3, those
of $k$-maximal subtrees, also apply to subtrees of maximal local
density. However, it is reasonable to expect that there exist properties
that $only$ subtrees of maximal local density have and $k$-maximal
subtrees generally do not.

Theorem \ref{thm:limbsincreaselocald} reveals some information about
the structure of the subtree attaining maximal local density. Let
us call a core vertex $v$ that is adjacent to some limb vertex a
$\mathit{joint\,vertex}$. Now if a subtree (not big enough to contain
the core, as this makes its local density minimal) contains some joint
vertex $v,$ then including the whole limb connected to $v$ would
only increase the local density. So for a subtree of maximal local
density and any joint vertex $v,$ it would either avoid $v$ or contain
$v$ along with its entire limb. Therefore, there are three possibilities
for the structure of a subtree with maximal local density:
\begin{enumerate}
\item A proper subtree of the core that does not contain any joint vertex.
\item A subtree that can be decomposed into two parts: a proper subtree
of the core containing some joint vertices, and their corresponding
whole limbs.
\item A proper subtree (path) of some limb that contains the leaf.
\end{enumerate}
\begin{example*}
Let us take a look at some concrete examples: the subtrees of maximal
local density in paths and stars.

The case of paths is straightforward since the path has no core. For
a path of order $n,$ by Proposition \ref{prop:bodydensity1/2}, any
non-empty proper subtree $S$ has $D\left(S\right)=\frac{1}{2}.$

Now consider the star $T=K_{1,n-1}$ with centre $c$ and a leaf $v.$
For any subtree $S$ with order at least 2, it must contain $\{c\}=T^{*}.$
Hence Proposition \ref{prop:bodydensity1/2} yields $D(c)=\frac{1}{2}$
and $D(S)=\frac{1}{2}$ for $|S|\geq2.$ Next, we calculate the densities
of singleton subtrees. Denote by $N_{c}$ the number of subtrees of
$T-\{v\}$ containing $c,$ and $R_{c}$ the total cardinality of
such subtrees. Then
\[
N_{c}=2^{n-2},R_{c}=2^{n-2}+\left(n-2\right)2^{n-3}=n2^{n-3},
\]
and equality (\ref{eq:3}) yields
\[
D\left(v\right)=\frac{\mu\left(v\right)-1}{n-1}=\frac{1+\frac{R_{c}}{N_{c}+1}-1}{n-1}=\frac{R_{c}}{\left(n-1\right)\left(N_{c}+1\right)}.
\]
Substituting $N_{c}$ and $R_{c}$ with the expressions above, we
have
\begin{equation}
D\left(v\right)=\frac{n2^{n-3}}{\left(n-1\right)\left(2^{n-2}+1\right)}\geq\frac{1}{2}=D\left(c\right),\label{eq:Dv>DcSTAR}
\end{equation}
for $n\geq4$ and equality holds in the middle inequality only when
$n=2$ or $3.$ Indeed, this inequality is equivalent to $2^{n-2}\geq n-1,$
which is true for all $n\geq2$ and equality holds if and only if
$n=2$ or $3.$

Therefore, the subtrees that attain the maximal local density in the
star $K_{1,n-1},$ for $n\geq2,$ are all the singleton subtrees consisting
of one leaf.\hfill \qedsymbol
\end{example*}
\medskip{}

The last part of this section answers the following question:
\begin{quote}
Does local density always exceed global density?
\end{quote}
From \cite{MR735190}, we know that local mean (at a vertex) exceeds
global mean, for any tree with order greater than one. The next example
eliminates the possibility for the parallel to hold in density.
\begin{example*}
Consider the star $T=K_{1,n-1}$ with $n\geq2.$ Let $v$ be a leaf
and $c$ the centre. We will compare the three quantities, the global
density $D_{T},$ the local density at a leaf $D_{T}(v)$ and the
local density at the centre $D_{T}(c).$ For the global density $D_{T},$
the number of all subtrees is $2^{n-1}+n-1,$ and the total cardinality
is 
\begin{align*}
 & \sum_{i=0}^{n-1}{n-1 \choose i}\left(i+1\right)+n-1\\
 & =2^{n-1}+\left(n-1\right)2^{n-2}+n-1\\
 & =\left(n+1\right)2^{n-2}+n-1.
\end{align*}
So
\[
D_{T}=\frac{\mu_{T}}{n}=\frac{\left(n+1\right)2^{n-2}+n-1}{n2^{n-1}+n^{2}-n}.
\]
Observe that $D_{T}>\frac{1}{2},$ since the inequality is equivalent
to $2^{n-1}>(n-1)(n-2),$ which holds for all $n\geq1.$ Then, on
the local densities, we use the result (\ref{eq:Dv>DcSTAR}) from
the previous example: 
\[
D_{T}\left(v\right)=\frac{n2^{n-3}}{\left(n-1\right)\left(2^{n-2}+1\right)}\geq\frac{1}{2}=D_{T}\left(c\right),
\]
where inequality holds strictly for $n\geq4,$ and $D_{T}(v)=\frac{1}{2}=D_{T}(c)$
for $n=2$ or $3.$

What is left is to compare $D_{T}\left(v\right)$ and $D_{T}.$ Simplification
yields that
\[
D_{T}\left(v\right)>D_{T}\;\textrm{if and only if }\;4^{n-2}+n(n-1)(n-4)2^{n-3}>(n-1)^{2},
\]
where the latter is true for all $n\geq4.$ To conclude, the three
quantities satisfy
\[
D_{T}\left(c\right)<D_{T}<D_{T}\left(v\right)
\]
for $n\geq4,$ and
\[
D_{T}\left(c\right)<D_{T}\left(v\right)<D_{T}
\]
for $n=2$ or $3.$ Hence the example shows that there is no general
order relation between local densities and the global density.\hfill \qedsymbol
\end{example*}

\section{On the local density of two-vertex subtrees}

In the last section, we study the simplest case when $S=\left\{ v,w\right\} ,$
where $v,w$ are adjacent, and compare $D_{T}\left(v,w\right),D_{T}\left(v\right)$
and $D_{T}\left(w\right).$ 
\begin{defn*}
Let $T_{v}$ be a tree with root at $v$ and order greater than 1.
Call $T_{v}$ of $\mathit{high\,index\,type},$ or $\mathit{type}$
$H,$ if it satisfies 
\[
D_{T_{v}}\left(v\right)\geq1-i\left(T_{v}\right).
\]
Call a rooted tree of $\mathit{low\,index\,type},$ or $\mathit{type}$
$L,$ if
\[
D_{T_{v}}\left(v\right)<1-i\left(T_{v}\right).
\]
\end{defn*}
\begin{rem*}
The restriction on the order is due to the fact that local density
is not defined for a singleton tree. In fact, for any tree $T,$ $D_{T}\left(T\right)$
is not defined.
\end{rem*}
Direct calculation from the definition implies that $T_{v}$ being
of type L is equivalent to 
\begin{equation}
\mu_{T_{v}}\left(v\right)<\frac{N_{v}n_{v}+n_{v}}{N_{v}+n_{v}},\label{eq:typeL}
\end{equation}
where $N_{v}$ is the number of subtrees rooted at $v$ and $n_{v}=\left|T_{v}\right|.$ 
\begin{thm}
\label{thm:deg2typeL}Let $T_{v}$ be a tree rooted at $v$ with $\deg(v)>1.$
Then $T_{v}$ is of type L.
\end{thm}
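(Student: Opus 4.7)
The plan is to establish the equivalent form $\mu_v(N_v+n_v) < n_v(N_v+1)$ of \eqref{eq:typeL}, obtained by clearing denominators in $D_{T_v}(v) < 1-i(T_v)$. I would actually prove the slightly stronger non-strict inequality $\mu_v(N_v+n_v) \le n_v(N_v+1)$ for every rooted tree $T_v$ with $n_v\ge 2$, with equality if and only if $T_v$ is a path having $v$ as an endpoint; since $\deg(v)\ge 2$ excludes that equality case, the theorem follows. The argument is by strong induction on $n_v$, with small cases verified by direct computation.

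For the inductive step with $\deg_{T_v}(v) = 1$, write $T_v = \{v\}\cup T_u$ for the unique neighbour $u$ of $v$ and use $N_v=N_u+1$, $R_v=R_u+N_u+1$, $n_v=n_u+1$. Substituting and cancelling reduces the desired inequality to
\[
n_u(N_u+1)^2 \ge R_u(N_u+n_u+2);
\]
applying the inductive bound $R_u \le n_u N_u(N_u+1)/(N_u+n_u)$ reduces it further to $n_u \ge N_u$. Combined with the trivial inequality $N_u\ge n_u$ (each vertex of $T_u$ yields one path-subtree rooted at $u$), equality forces $N_u=n_u$, which characterises $T_u$ (and hence $T_v$) as a path rooted at its endpoint.

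For the inductive step with $\deg_{T_v}(v)\ge 2$, I would first handle the star $T_v=K_{1,d}$ directly---here the claim simplifies, after substituting $\mu_v=1+d/2$, $N_v=2^d$, $n_v=d+1$, to $2^d > d+1$, valid for all $d\ge 2$---and otherwise remove a leaf $\ell$ of $T_v$ not adjacent to $v$ and apply the inductive hypothesis to $T_v-\ell$, which still has $\deg(v)\ge 2$ but fewer vertices. Writing $N_v = N'+M$ and $R_v = R'+W+M$, where primed quantities are those of $T_v-\ell$ and $M,W$ respectively count the number and total order of subtrees of $T_v-\ell$ containing both $v$ and $\ell$'s unique neighbour, one obtains a polynomial inequality in $M$ and $W$ to be checked. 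I expect this transport step to be the main obstacle: the extra quadratic-in-$M$ terms introduced by $\ell$ must not destroy the strict margin given by the inductive hypothesis, which should be controlled by elementary bounds such as $W\le (n_v-1)M$ and $M\le N'$.
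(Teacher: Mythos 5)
Your reformulation of type L as $\mu_{v}(N_{v}+n_{v})<n_{v}(N_{v}+1)$ is indeed equivalent to \eqref{eq:typeL}, and your direct verification for the star $K_{1,d}$ rooted at its centre is correct, but both inductive steps have genuine problems. In the $\deg(v)=1$ step, the algebra reducing the claim to $n_{u}(N_{u}+1)^{2}\geq R_{u}(N_{u}+n_{u}+2)$ is right, but substituting the inductive bound $R_{u}\leq n_{u}N_{u}(N_{u}+1)/(N_{u}+n_{u})$ leaves you needing $n_{u}\geq N_{u}$, and this is a \emph{sufficient} condition that essentially never holds: since every vertex of $T_{u}$ gives a path-subtree rooted at $u$, one always has $N_{u}\geq n_{u}$, with equality only when $T_{u}$ is a path rooted at an endpoint. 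So your chain closes only in the equality case and proves nothing otherwise; the logic is inverted. Concretely, take $T_{u}=P_{3}$ rooted at its centre (so $T_{v}=K_{1,3}$ rooted at a leaf): the step requires $R_{u}\cdot 9\leq 3\cdot 25$, i.e.\ $R_{u}\leq 75/9$, while the inductive hypothesis only yields $R_{u}\leq 60/7>75/9$. The true value $R_{u}=8$ satisfies the target, but your argument cannot see this---the inductive hypothesis alone is too weak to carry this step.

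The $\deg(v)\geq 2$ transport step, which is where the whole theorem lives, is not carried out, and the bounds you propose provably do not close it. Writing the target as $(R'+W+M)(N'+M+n'+1)\leq(n'+1)(N'+M)(N'+M+1)$ and treating $R',W,M$ as free parameters subject only to your constraints (the inductive bound on $R'$, $W\leq n'M$, $M\leq N'$), one gets, after substituting $R'=n'N'(N'+1)/(N'+n')$ and $W=n'M$, an expression whose coefficient of $M$ equals $\bigl(N'(N'-n')-n'^{2}(n'+1)\bigr)/(N'+n')$; this is negative whenever $N'$ is small relative to $n'^{3/2}$ (e.g.\ $T'$ a path rooted next to an endpoint, where $N'=2(n'-1)$), and pushing $M$ toward $N'$ then violates the inequality. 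So the ``quadratic-in-$M$'' terms are not controlled by the elementary bounds you list; you would need finer structural information relating $M$ and $W$ to the actual tree at the neighbour of the deleted leaf, which is exactly the hard content you have deferred. For comparison, the paper's proof needs no induction: since $\deg(v)\geq 2$ it splits $T_{v}$ at the root into branches $A,B$ with $a,b\geq 2$, pairs rooted subtrees of $A$ and $B$ of every order to get $R\leq N(n-1)-ab\frac{n-3}{2}$ with $ab\geq 2(n-1)$, and a short computation gives \eqref{eq:typeL}; this direct exploitation of the degree hypothesis is precisely what your transport step lacks. (Note also that if your transport step did work, the $\deg(v)=1$ case would be dispensable: removing a leaf not adjacent to $v$ preserves $\deg(v)\geq 2$, with centre-rooted stars as base cases.)
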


\begin{proof}
By assumption, $T_{v}$ can be split into two rooted subtrees $A_{v}$
and $B_{v},$ both with root $v$ and order greater than 1. See Figure~\ref{fig:typeL}. 

\begin{figure}[h]     
\begin{center}     
\begin{tikzpicture}     
	{          
	    
	\draw[fill] (0,0) circle (0.05);  

	\node[thick,rotate=25,anchor=center] at (0.55,0.3) {$\ldots$}; 
	\node[thick,rotate=-25,anchor=center] at (0.55,-0.3) {$\ldots$};

	\draw[dashed,rotate=25,anchor=center] (1.3,0.3) arc (90:270:1.6cm and 0.3cm);
	\draw[dashed,rotate=-25,anchor=center] (1.3,0.3) arc (90:270:1.6cm and 0.3cm);
	
    \node at (0,0.5) {$v$};
	\node at (-0.7,0) {$T_v$};
	\node[rotate=25,anchor=center] at (1.1,0.55) {$A$};
	\node[rotate=-25,anchor=center] at (1.1,-0.5) {$B$};   
	     } 	
\end{tikzpicture}\\ 
\end{center} 
\caption{$T_v$ splits into rooted subtrees $A$ and $B$.}
\label{fig:typeL} 
\end{figure}
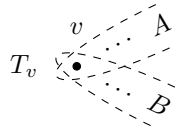 

If $a=|A_{v}|$ and $b=|B_{v}|,$ then $a,b\geq2$ and $a+b=n+1,$
where $n=|T_{v}|\geq3.$ Observe that for any rooted tree of order
$m$ and any integer $k\in\left\{ 1,\ldots,m\right\} ,$ there exists
at least one rooted subtree of order $k.$ Also note that each pair
of rooted subtrees of $A_{v}$ and $B_{v},$ of orders $i$ and $j$
respectively, corresponds to a subtree of $T_{v}$ rooted at $v$
of order $i+j-1.$ Combining these two observations, denoting $R=R_{T_{v}}\left(v\right)$
and $N=N_{T_{v}}\left(v\right),$ we have
\begin{align*}
R & \leq\sum_{i=1}^{a}\sum_{j=1}^{b}\left(i+j-1\right)+\left(N-ab\right)\left(n-1\right)\\
 & =\frac{ab}{2}\left(a+b\right)+\left(N-ab\right)\left(n-1\right)\\
 & =N\left(n-1\right)+ab\left(\frac{a+b}{2}-n+1\right)\\
 & =N\left(n-1\right)+ab\left(\frac{n+1}{2}-n+1\right)\\
 & =N\left(n-1\right)-ab\frac{n-3}{2}\\
 & \leq N\left(n-1\right)-\left(n-1\right)\left(n-3\right),
\end{align*}
where the last inequality follows from $n\geq3$ and the fact that
$a+b=n+1$ and $a,b\geq2,$ thus $ab\geq2\left(n-1\right).$ Write
$\mu=\mu_{T_{v}}\left(v\right),$ then
\begin{align*}
\frac{Nn+n}{N+n}-\mu & =\frac{Nn+n}{N+n}-\frac{R}{N}\\
 & \geq\frac{Nn+n}{N+n}-\frac{N\left(n-1\right)-\left(n-1\right)\left(n-3\right)}{N}\\
 & =\frac{N\left(N-2n+3\right)+n\left(n-1\right)\left(n-3\right)}{N\left(N+n\right)}\\
 & \overset{(*)}{\geq}\frac{N}{N\left(N+n\right)}\\
 & =\frac{1}{N+n}>0,
\end{align*}
where step $(*)$ follows from $N\geq ab\geq2\left(n-1\right).$
\end{proof}
\begin{thm}
\label{thm:typeH}Let $T_{v}$ be a type H tree rooted at $v$ with
order at least 2. Let $T_{v'}$ be the tree rooted at $v'$ obtained
by connecting a pendant vertex $v'$ to $v$ of $T_{v},$ as in Figure~\ref{fig:typeH}.
Then $T_{v'}$ is of type H.
\end{thm}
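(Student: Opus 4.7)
The plan is to reduce both the type H hypothesis on $T_v$ and the type H conclusion on $T_{v'}$ to polynomial inequalities in the three quantities $N := N_{T_v}(v)$, $R := R_{T_v}(v)$ and $n := |T_v|$, and then deduce the latter from the former. First I would record the update formulas induced by attaching the pendant $v'$ at $v$: every subtree of $T_{v'}$ rooted at $v'$ is either the singleton $\{v'\}$ or consists of $v'$ together with a subtree of $T_v$ rooted at $v,$ which gives
\[
N_{T_{v'}}(v') = N+1, \qquad R_{T_{v'}}(v') = R+N+1, \qquad |T_{v'}| = n+1.
\]

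Next, by clearing denominators in the definitions of $D$ and of the index, the type H hypothesis $D_{T_v}(v) \geq 1-i(T_v)$ is equivalent to
\[
R(N+n) \;\geq\; nN(N+1),
\]
while the type H conclusion $D_{T_{v'}}(v') \geq 1-i(T_{v'})$, after substituting the update formulas and simplifying, is equivalent to
\[
R(N+n+2) \;\geq\; n(N+1)^2.
\]
It therefore suffices to establish the elementary comparison
\[
\frac{nN(N+1)}{N+n} \;\geq\; \frac{n(N+1)^2}{N+n+2},
\]
which clears to $N(N+n+2) \geq (N+1)(N+n)$, i.e.\ $N \geq n$.

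The main obstacle is thus the single combinatorial inequality $N_{T_v}(v) \geq |T_v|$. I would prove this by induction on $n$ using the product formula $N_{T_v}(v) = \prod_{i=1}^d (N_i + 1)$ from \eqref{eq:2} (with $N_i$ and $n_i$ the corresponding data at the children of $v$), combined with the elementary estimate $\prod_{i=1}^d (1+x_i) \geq 1 + \sum_{i=1}^d x_i$ for nonnegative reals $x_i$. The base case $n=1$ is trivial, and for $n>1$ the induction hypothesis gives $N_i + 1 \geq n_i + 1$, so $\prod (N_i+1) \geq \prod (n_i+1) \geq 1 + \sum n_i = n$. With $N \geq n$ in hand, the chain of equivalences above closes the proof without further case analysis, and I expect the argument to naturally track the equality case (which corresponds to $T_v$ being astral over $v$, the same case as in \eqref{eq:lowerbound}).
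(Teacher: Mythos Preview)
Your argument is correct. Both proofs begin by rewriting the type~H condition as a polynomial inequality in $N,R,n$ and use the same update formulas $N'=N+1$, $R'=R+N+1$, $n'=n+1$, but they diverge at the final step. The paper expands the difference and bounds it using the hypothesis to reach $Nn+n-2R\le 0$, which it then deduces from Jamison's lower bound \eqref{eq:lowerbound} (this gives $2R\ge N(n+1)$, and one still needs the trivial $N\ge n$ to finish). You instead observe directly that the hypothesis $R\ge \frac{nN(N+1)}{N+n}$ implies the conclusion $R\ge \frac{n(N+1)^2}{N+n+2}$ as soon as $N\ge n$, and then prove $N\ge n$ by an elementary induction via \eqref{eq:2}. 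Your route is a bit more transparent and self-contained, since it isolates $N\ge n$ as the single combinatorial input rather than passing through \eqref{eq:lowerbound}; the paper's version has the minor advantage of tying the result to the already-established local-mean lower bound. Incidentally, $N\ge n$ also follows at once without induction: for each vertex $u$ of $T_v$, the $v$--$u$ path is a distinct subtree containing $v$.
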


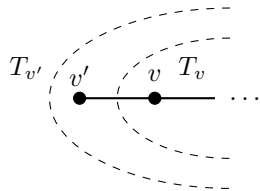
\begin{figure}[h]     
\begin{center}     
\begin{tikzpicture}     
	{          
	\draw[thick] (0,0)--(1,0) -- (1.8,0);    
	\draw[fill] (0,0) circle (0.08);     
	\draw[fill] (1,0) circle (0.08);
	\draw[dashed] (2,1.2) arc (90:270:2.4cm and 1.2cm);
	\draw[dashed] (2,0.8) arc (90:270:1.5cm and 0.8cm);
	
	\node at (-0.7,0.4) {$T_{v'}$};
	\node at (1.5,0.4) {$T_v$};
    \node at (0,0.3) {$v'$};
	\node at (1,0.3) {$v$};
	\node at (2.2,0) {$\ldots$};    
	     } 	
\end{tikzpicture}\\ 
\end{center} 
\caption{$T_v$ and $T_{v'}$.}
\label{fig:typeH} 
\end{figure} 
\begin{proof}
Let $N$ be the number of subtrees of $T_{v}$ rooted at $v$ and
$R$ the total cardinality of these subtrees. Denote the corresponding
quantities for $T_{v'}$ by $N'$ and $R'$. We know $N'=N+1,$ $R'=R+N+1$
and $n'=n+1.$ The conclusion is equivalent to 
\begin{align}
\mu_{T_{v'}}\left(v'\right)=\frac{R'}{N'} & \geq\frac{N'n'+n'}{N'+n'}\nonumber \\
N'^{2}n'+N'n'-R'\left(N'+n'\right) & \leq0,\label{eq:ineqinproof5.3}
\end{align}
and the assumption is equivalent to
\begin{equation}
N^{2}n+Nn-R\left(N+n\right)\leq0.\label{eq:ineqinproof5.3_2}
\end{equation}
Expanding the left hand side of (\ref{eq:ineqinproof5.3}) in terms
of $N,R$ and $n,$ we have 
\begin{align*}
 & \left(N+1\right)^{2}\left(n+1\right)+\left(N+1\right)\left(n+1\right)-\left(R+N+1\right)\left(N+n+2\right)\\
 & =N^{2}n+2Nn-R\left(N+n\right)+n-2R\\
\textrm{(by \eqref{eq:ineqinproof5.3_2})} & \leq Nn+n-2R\\
 & \leq0,
\end{align*}
where the last step follows from an application of inequality (\ref{eq:lowerbound}).
\end{proof}
Now we focus on the main task of this section. Let $S=\left\{ v,w\right\} $
be a proper subtree of $T$ where $v,w$ are adjacent. Our goal is
to compare $D_{T}\left(v,w\right),$ $D_{T}\left(v\right)$ and $D_{T}\left(w\right).$
Note that at most one of $v,w$ can be a leaf of $T$ since $T\neq S.$
First we consider the case that $v$ is a leaf of $T,$ as in Figure~\ref{fig:visleaf}.

\begin{figure}[h]     
\begin{center}     
\begin{tikzpicture}     
	{          
	\draw[thick] (0,0)--(1,0);    
	\draw[fill] (0,0) circle (0.08);     
	\draw[fill] (1,0) circle (0.08);
	
	\draw[dashed] (2,0.8) arc (90:270:1.5cm and 0.8cm);
	
	\node at (0.4,0.8) {$T$};
	\node at (1.7,0.4) {$T_w$};
    \node at (0,0.3) {$v$};
	\node at (1,0.3) {$w$};
	\node at (1.5,0) {$\ldots$};    
	     } 	
\end{tikzpicture}\\ 
\end{center} 
\caption{Case 1: $v$ is a leaf of $T$.}
\label{fig:visleaf} 
\end{figure}
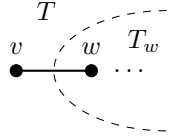 

Theorem \ref{thm:remove1/2node} tells us that $D_{T}\left(v,w\right)\geq D_{T}\left(w\right).$
Furthermore, by Theorem \ref{thm:DSvsDS-w}, we know that $D_{T}\left(v,w\right)\geq D_{T}\left(v\right)$
if and only if
\[
1-i\left(w;v\right)\leq D_{T}\left(v,w\right)=D_{T_{w}}\left(w\right),
\]
where the last equality follows from $\mu_{T}\left(v,w\right)=\mu_{T_{w}}\left(w\right)+1.$
That is to say, if $T_{w}$ is a tree of type H rooted at $w,$ then
$D_{T}\left(v,w\right)$ is no less than $D_{T}\left(v\right)$ and
$D_{T}\left(w\right).$ If $T_{w}$ is of type L, then $D_{T}\left(v,w\right)<D_{T}\left(v\right)$
and $D_{T}\left(v\right)$ is the largest of the three.

Now assume that both $v,w$ are not leaves, as in Figure~\ref{fig:visnotleaf}.
Let $T_{v}$ (resp. $T_{w}$) be the component of $v$ (resp. $w$)
in $T-vw,$ and $n_{v}=\left|T_{v}\right|$ (resp. $n_{w}=\left|T_{w}\right|$).
Then $D_{T_{v}}\left(v\right)$ and $D_{T_{w}}\left(w\right)$ are
both well defined. Denote the number of subtrees of $T_{v}$ rooted
at $v$ and the total cardinalities of such subtrees by $N_{v}$ and
$R_{v}.$ Denote the corresponding quantities of $w$ by $N_{w}$
and $R_{w}.$ 

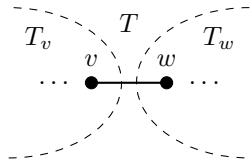
\begin{figure}[h]     
\begin{center}     
\begin{tikzpicture}     
	{          
	\draw[thick] (0,0)--(1,0);    
	\draw[fill] (0,0) circle (0.08);     
	\draw[fill] (1,0) circle (0.08);
	
	\node at (-0.5,0) {$\ldots$};
	
	\draw[dashed] (2.1,1) arc (90:270:1.5cm and 1cm);
	\draw[dashed] (-1.1,-1) arc (-90:90:1.5cm and 1cm);
	
	\node at (0.5,0.8) {$T$};
	\node at (1.7,0.6) {$T_w$};
	\node at (-0.7,0.6) {$T_v$};
    \node at (0,0.3) {$v$};
	\node at (1,0.3) {$w$};
	\node at (1.5,0) {$\ldots$};    
	     } 	
\end{tikzpicture}\\ 
\end{center} 
\caption{Case 2: neither $v$ nor $w$ is a leaf of $T$.}
\label{fig:visnotleaf} 
\end{figure} 

We start with an application of Theorem \ref{thm:DSvsDS-w},
\begin{align*}
\mu_{T}\left(v\right) & =\mu_{T}\left(v,w\right)-i\left(w;v\right)\\
\mu_{T}\left(v\right) & =\mu_{T_{v}}\left(v\right)+\mu_{T_{w}}\left(w\right)-i\left(w;v\right)\\
\mu_{T}\left(v\right)-1 & =\left(\mu_{T_{v}}\left(v\right)-1\right)+\left(\mu_{T_{w}}\left(w\right)-1\right)+\left(1-i\left(w;v\right)\right).
\end{align*}
Dividing both sides by $n-1,$ we have
\begin{align}
D_{T}\left(v\right) & =\frac{n_{v}-1}{n-1}D_{T_{v}}\left(v\right)+\frac{n_{w}-1}{n-1}D_{T_{w}}\left(w\right)+\frac{1}{n-1}\left(1-i\left(w;v\right)\right).\label{eq:Dv}
\end{align}
An analogous calculation provides
\begin{equation}
D_{T}\left(w\right)=\frac{n_{v}-1}{n-1}D_{T_{v}}\left(v\right)+\frac{n_{w}-1}{n-1}D_{T_{w}}\left(w\right)+\frac{1}{n-1}\left(1-i\left(v;w\right)\right).\label{eq:Dw}
\end{equation}
Before proceeding, we make some observations.
\begin{itemize}
\item Since
\[
\frac{n_{v}-1}{n-1}+\frac{n_{w}-1}{n-1}+\frac{1}{n-1}=1,
\]
and each term is in $\left[0,1\right],$ $D_{T}\left(v\right)$ is
a convex combination of $D_{T_{v}}\left(v\right),D_{T_{w}}\left(w\right)$
and $1-i\left(w;v\right),$ and the same holds for $D_{T}\left(w\right).$
\item $D_{T}\left(v,w\right)$ lies between $D_{T_{v}}\left(v\right)$ and
$D_{T_{w}}\left(w\right)$ as
\begin{align*}
D_{T}\left(v,w\right) & =\frac{\mu_{T}\left(v,w\right)-2}{n-2}\\
 & =\frac{\mu_{T_{v}}\left(v\right)-1+\mu_{T_{w}}\left(w\right)-1}{n_{v}-1+n_{w}-1},
\end{align*}
and both $\frac{\mu_{T_{v}}\left(v\right)-1}{n_{v}-1}=D_{T_{v}}\left(v\right)$
and $\frac{\mu_{T_{w}}\left(w\right)-1}{n_{w}-1}=D_{T_{w}}\left(w\right)$
are positive quantities $<1.$
\item The relation between $D_{T_{v}}\left(v\right)$ and $1-i\left(v;w\right)$
is determined $\mathit{only}$ by the structure of $T_{v},$ in other
words, the type of rooted tree $T_{v}.$ 
\item Theorem \ref{thm:DSvsDS-w} says that $D_{T}\left(v\right)\geq1-i\left(w;v\right)\Leftrightarrow D_{T}\left(v,w\right)\geq D_{T}\left(v\right)$
and $D_{T}\left(w\right)\geq1-i\left(v;w\right)\Leftrightarrow D_{T}\left(v,w\right)\geq D_{T}\left(w\right).$
Hence the relation of $D\left(v,w\right),$ $D\left(v\right)$ and
$D\left(w\right)$ is encoded in the relation of $D_{T_{v}}\left(v\right),D_{T_{w}}\left(w\right)$
and the types of $T_{v}$ and $T_{w}.$
\end{itemize}
We get back to the case of two-vertex subtrees. Assume $D_{T_{v}}\left(v\right)\geq D_{T_{w}}\left(w\right).$
Then we have the following results.

\begin{table}[H]
\captionsetup{justification=centering}

\caption{Relation of $D_{T}\left(v,w\right),D_{T}\left(v\right)$ and $D_{T}\left(w\right)$
under the assumption $D_{T_{v}}\left(v\right)\protect\geq D_{T_{w}}\left(w\right)$}

\begin{tabular}{|c|c|c|}
\cline{2-3} \cline{3-3} 
\multicolumn{1}{c|}{} & $T_{w}:$ type H & $T_{w}:$ type L\tabularnewline
\hline 
$T_{v}:$ type H & $D_{T}\left(v,w\right)\geq D_{T}\left(v\right),D_{T}\left(w\right)$ & no conclusion\tabularnewline
\hline 
$T_{v}:$ type L & $D_{T}\left(v\right)\geq D_{T}\left(v,w\right)\geq D_{T}\left(w\right)$ & $D_{T}\left(v\right)\geq D_{T}\left(v,w\right)$\tabularnewline
\hline 
\end{tabular}

\end{table}

\begin{thm}
Let $v,w$ be two adjacent vertices of tree $T.$ Let $T_{v},T_{w}$
be the subtrees of $T-vw$ rooted at $v$ and $w$ respectively. Then, 
\begin{enumerate}
\item if both $T_{v}$ and $T_{w}$ are of type H, then $D_{T}\left(v,w\right)\geq\max\left\{ D_{T}\left(v\right),D_{T}\left(w\right)\right\} $
\item if $T_{v}$ is of type L and $D_{T_{v}}\left(v\right)\geq D_{T_{w}}\left(w\right),$
then $D_{T}\left(v\right)>D_{T}\left(v,w\right).$
\end{enumerate}
\end{thm}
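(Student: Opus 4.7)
The plan is to translate both claims into inequalities between $D_T(v,w)$ and two indices via Theorem~\ref{thm:DSvsDS-w}, then exploit the convex-combination structure of $D_T(v,w)$. Setting $\alpha=D_{T_v}(v)$, $\beta=D_{T_w}(w)$, $a=n_v-1$, $b=n_w-1$, $p=1-i(T_w)$, $q=1-i(T_v)$, the identity $\mu_T(v,w)=\mu_{T_v}(v)+\mu_{T_w}(w)$ gives
\[
D_T(v,w)=\frac{a\alpha+b\beta}{a+b}\in[\min(\alpha,\beta),\max(\alpha,\beta)],
\]
and Theorem~\ref{thm:DSvsDS-w} yields the two equivalences $D_T(v,w)\ge D_T(v)\iff D_T(v,w)\ge p$ and $D_T(v,w)\ge D_T(w)\iff D_T(v,w)\ge q$.

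For part~(1), both type H give $\alpha\ge q$ and $\beta\ge p$. Assuming without loss of generality $\alpha\ge\beta$, the convex combination immediately yields $D_T(v,w)\ge\beta\ge p$, so $D_T(v,w)\ge D_T(v)$. The other inequality $D_T(v,w)\ge q$ does not follow from the convex combination alone, so I would appeal to the structural rigidity of type H: rewriting the condition as $\mu_{T_v}(v)\ge n_v(N_v+1)/(N_v+n_v)$ and combining with the lower bound $\mu_{T_v}(v)\ge(n_v+1)/2$ from \eqref{eq:lowerbound} together with the universal inequality $N_v\ge n_v$, one deduces that type H forces $T_v$ to be a path rooted at $v$. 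Both type H therefore makes $T$ itself a path, so every non-empty proper subtree has density $\tfrac{1}{2}$ by Proposition~\ref{prop:bodydensity1/2}, and all three quantities $D_T(v,w)$, $D_T(v)$, $D_T(w)$ coincide at $\tfrac{1}{2}$, giving the claimed inequality (in fact, equality).

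For part~(2), type L of $T_v$ gives $\alpha<q$; combined with $\alpha\ge\beta$, the convex combination satisfies $D_T(v,w)\le\alpha<q$. By Theorem~\ref{thm:DSvsDS-w} this already produces the strict comparison $D_T(w)>D_T(v,w)$. The stated conclusion $D_T(v)>D_T(v,w)$, equivalent to $D_T(v,w)<p$, requires bridging from a bound against $q$ to a bound against $p$; I expect to close this by using $\alpha\ge\beta$ more carefully together with a case analysis on the relative sizes of $p$ and $q$, in the spirit of the weighted-average estimates used in Theorem~\ref{thm:indexchange_branching}.

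The main technical obstacle is precisely this asymmetry: the type-H/L conditions compare $\alpha$ to $q$ and $\beta$ to $p$, whereas the target inequalities need comparisons of $D_T(v,w)$ with both $p$ and $q$. Part~(1) dissolves the obstacle by pinning the type H components to paths so that $p=q=\tfrac{1}{2}$; part~(2) relies on the ordering hypothesis $\alpha\ge\beta$ combined with the strictness coming from type L to push the bound from the $q$-side to the $p$-side.
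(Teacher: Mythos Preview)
For part~(1) there is a genuine error: type H does \emph{not} force $T_v$ to be a path rooted at a leaf. A counterexample is the tree $v\!-\!u_1\!-\!u_2\!-\!u_3$ with two extra leaves attached at $u_3$: here $n_v=6$, $N_v=7$, $R_v=26$, so $\mu_{T_v}(v)=26/7$, which exceeds the type-H threshold $n_v(N_v+1)/(N_v+n_v)=48/13$. Theorem~\ref{thm:deg2typeL} only tells you the root has degree~$1$; Theorem~\ref{thm:typeH} only shows paths are type H, not the converse. So your structural-rigidity route collapses.

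The paper's mechanism is different and avoids this entirely. Rather than bounding $D_T(v,w)$ through your two-term average $(a\alpha+b\beta)/(a+b)$, it bounds $D_T(v)$ and $D_T(w)$ through the \emph{three-term} convex-combination formulas \eqref{eq:Dv}--\eqref{eq:Dw}, in which the target quantity $1-i(\cdot)$ already appears as one of the three terms. For part~(2): from $\alpha<q$ (type L of $T_v$) together with the hypothesis $\beta\le\alpha<q$, formula \eqref{eq:Dw} gives $D_T(w)<q$ directly, hence $D_T(v,w)<D_T(w)$ by Theorem~\ref{thm:DSvsDS-w}. This is exactly what your two-term estimate $D_T(v,w)\le\alpha<q$ also yields, so on part~(2) you and the paper coincide. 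Your observation that the \emph{stated} conclusion $D_T(v)>D_T(v,w)$ would need a further passage from $q$ to $p$ is correct; the paper supplies no such passage either---its chain of implications produces the inequality with $w$, and the $v$ in the final line should be read as $w$. For part~(1) the paper's ``similarly'' is the mirror computation with \eqref{eq:Dv}: under the table's standing assumption $\alpha\ge\beta$, type H of $T_w$ gives $\beta\ge p$, whence $\alpha\ge\beta\ge p$ and the third term equals $p$, so $D_T(v)\ge p$ and thus $D_T(v,w)\ge D_T(v)$---no rigidity claim required.
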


\begin{proof}
We shall derive the second case and the first is obtained similarly.
By assumption, we have
\[
D_{T_{v}}\left(v\right)<1-i\left(v;w\right).
\]
By equation (\ref{eq:Dw}), it implies
\[
D_{T}\left(w\right)<1-i\left(v;w\right).
\]
The last inequality is strict as singleton rooted trees are of type
H, hence we know $n_{v}>1$ which implies that the first coefficient
$\frac{n_{v}-1}{n-1}$ is positive. Then, it is further equivalent
to
\[
D_{T}\left(v,w\right)<D_{T}\left(v\right)
\]
by Theorem \ref{thm:DSvsDS-w}.
\end{proof}
From the last theorem and the results on types of rooted trees, one
readily obtains the following more intuitive description.
\begin{cor}
Let notations be as above. Then, 
\begin{enumerate}
\item $\deg_{T}\left(v\right)=\deg_{T}\left(w\right)=2$ is a necessary
condition for $D_{T}\left(v,w\right)\geq D_{T}\left(v\right)$ and
$D_{T}\left(v,w\right)\geq D_{T}\left(w\right);$
\item if $\deg_{T}\left(v\right)>2$ and $\deg_{T}\left(w\right)>2,$ then
$D_{T}\left(v,w\right)<\max\left\{ D_{T}\left(v\right),D_{T}\left(w\right)\right\} .$
\end{enumerate}
\end{cor}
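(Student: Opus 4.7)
The plan is to reduce both parts to the preceding theorem via Theorem~\ref{thm:deg2typeL}. For part~(2), the hypotheses $\deg_T(v)>2$ and $\deg_T(w)>2$ translate into $\deg_{T_v}(v)=\deg_T(v)-1>1$ and $\deg_{T_w}(w)>1,$ so by Theorem~\ref{thm:deg2typeL} both rooted trees $T_v$ and $T_w$ are of type~L. Assuming without loss of generality that $D_{T_v}(v)\geq D_{T_w}(w),$ part~(2) of the preceding theorem gives $D_T(v)>D_T(v,w),$ whence $D_T(v,w)<\max\{D_T(v),D_T(w)\}.$

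For part~(1), I argue the contrapositive in the Case~2 setup (so $\deg_T(v),\deg_T(w)\geq 2$). Assume without loss of generality $\deg_T(v)>2,$ which by Theorem~\ref{thm:deg2typeL} forces $T_v$ to be of type~L. If $D_{T_v}(v)\geq D_{T_w}(w),$ part~(2) of the preceding theorem directly contradicts $D_T(v,w)\geq D_T(v).$ Otherwise $D_{T_w}(w)>D_{T_v}(v);$ when $T_w$ is also of type~L (which in particular happens if $\deg_T(w)>2$), the symmetric form obtained by relabeling $v\leftrightarrow w$ refutes $D_T(v,w)\geq D_T(w).$

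The remaining subcase, with $T_v$ of type~L, $T_w$ of type~H, and $D_{T_w}(w)>D_{T_v}(v),$ is the expected main obstacle. Here I would exploit the convex-combination identity
\[
D_T(v,w)=\frac{(n_v-1)D_{T_v}(v)+(n_w-1)D_{T_w}(w)}{n-2}
\]
together with the type-L inequality $D_{T_v}(v)<1-i(T_v)=1-i(v;w),$ aiming to show $D_T(v,w)<1-i(v;w)$ so that Theorem~\ref{thm:DSvsDS-w} yields $D_T(v,w)<D_T(w)$ and contradicts the hypothesis. Closing this estimate requires an upper bound on $D_{T_w}(w)$ when $T_w$ is type~H; I would derive such a bound from the structural information supplied by Theorems~\ref{thm:deg2typeL} and~\ref{thm:typeH}, which together force $\deg_{T_w}(w)=1$ and restrict type-H trees to a narrow class built by successively attaching a pendant at the root.
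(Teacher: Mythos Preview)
Your argument for part~(2) is correct and is exactly the intended one: both degrees exceeding~$2$ forces both $T_v$ and $T_w$ to be of type~L by Theorem~\ref{thm:deg2typeL}, and then part~(2) of the preceding theorem applies after choosing the labeling with $D_{T_v}(v)\ge D_{T_w}(w)$.

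For part~(1), your reduction is also correct up to the ``remaining subcase'' ($T_v$ of type~L, $T_w$ of type~H, $D_{T_w}(w)>D_{T_v}(v)$), but that subcase is a genuine gap that you have not closed. Note that after relabeling it is precisely the cell of Table~1 that the paper itself marks ``no conclusion'': the preceding theorem gives nothing here, so an independent estimate is required. Your proposed route---showing $D_T(v,w)<1-i(v;w)$ via the convex-combination formula---would need $D_{T_w}(w)<1-i(v;w)$ (or a suitable weighted version), but the type-H hypothesis only controls $D_{T_w}(w)$ relative to $1-i(w;v)$, not $1-i(v;w)$, and there is no a~priori comparison between these two quantities. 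Likewise, Theorems~\ref{thm:deg2typeL} and~\ref{thm:typeH} tell you that a type-H root has degree~$1$ and that the class is closed under pendant-extension at the root, but they do \emph{not} characterise type-H trees, so your hope of extracting a usable upper bound on $D_{T_w}(w)$ from them is not justified as stated.

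In short: part~(2) is done; for part~(1) you have correctly isolated the hard case but not dealt with it, and the sketch you give for it does not go through without a new idea. The paper's ``readily'' conceals exactly this point, and its own table confirms that the preceding theorem alone is insufficient here.
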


\section{Acknowledgement}

I thank Professor S. Wagner for all the discussions and advice.

\bibliographystyle{plain}
\bibliography{Localmeananddensity}

\begin{thebibliography}{1}

\bibitem{MR4282633}
Stijn Cambie, Stephan Wagner, and Hua Wang.
\newblock On the maximum mean subtree order of trees.
\newblock {\em European J. Combin.}, 97:Paper No. 103388, 19, 2021.

\bibitem{MR3213626}
John Haslegrave.
\newblock Extremal results on average subtree density of series-reduced trees.
\newblock {\em J. Combin. Theory Ser. B}, 107:26--41, 2014.

\bibitem{MR735190}
Robert~E. Jamison.
\newblock On the average number of nodes in a subtree of a tree.
\newblock {\em J. Combin. Theory Ser. B}, 35(3):207--223, 1983.

\bibitem{MR762896}
Robert~E. Jamison.
\newblock Monotonicity of the mean order of subtrees.
\newblock {\em J. Combin. Theory Ser. B}, 37(1):70--78, 1984.

\bibitem{MR4563206}
Zuwen Luo, Kexiang Xu, Stephan Wagner, and Hua Wang.
\newblock On the mean subtree order of trees under edge contraction.
\newblock {\em J. Graph Theory}, 102(3):535--551, 2023.

\bibitem{MR3982896}
Lucas Mol and Ortrud~R. Oellermann.
\newblock Maximizing the mean subtree order.
\newblock {\em J. Graph Theory}, 91(4):326--352, 2019.

\bibitem{MR2595700}
Andrew Vince and Hua Wang.
\newblock The average order of a subtree of a tree.
\newblock {\em J. Combin. Theory Ser. B}, 100(2):161--170, 2010.

\bibitem{MR3433637}
Stephan Wagner and Hua Wang.
\newblock On the local and global means of subtree orders.
\newblock {\em J. Graph Theory}, 81(2):154--166, 2016.

\end{thebibliography}

\end{document}